\DeclareMathOperator*{\Ran}{Ran}
\DeclareMathOperator*{\Ker}{Ker}
\DeclareMathOperator*{\R}{Re}
\newcommand{\dd}{\mathrm{d}}
\newcommand{\B}{\mathcal{B}}
\newcommand{\RR}{\mathbb{R}}
\newcommand{\CC}{\mathbb{C}}
\newcommand{\NN}{\mathbb{N}}
\newcommand{\ZZ}{\mathbb{Z}}
\newcommand{\T}{(T(t))_{t\ge 0}}
\newcommand{\TN}{(T_N(t))_{t\ge 0}}
\newcommand{\eps}{\varepsilon}
\newtheorem{thm}{Theorem}[section]
\newtheorem{prp}[thm]{Proposition}
\newtheorem{lem}[thm]{Lemma}
\theoremstyle{definition}
\newtheorem{rem}[thm]{Remark}
\newtheorem{ex}[thm]{Example}
\numberwithin{equation}{section}
\begin{document}
\title[Asymptotics and Approximation of Large Systems of ODEs]{ Asymptotics and Approximation of Large Systems of Ordinary Differential Equations}

\author[L. Paunonen]{Lassi Paunonen}
\address[L. Paunonen]{Department of Mathematics, Tampere University of Technology, PO.\ Box 553, 33101 Tampere, Finland}
\email{lassi.paunonen@tut.fi}

\author[D. Seifert]{David Seifert}
\address[D. Seifert]{School of Mathematics, Statistics and Physics, Herschel Building, Newcastle University, Newcastle upon Tyne, NE1 7RU, UK}
\email{david.seifert@ncl.ac.uk}

\begin{abstract}
In this paper we continue our earlier investigations into the asymptotic behaviour of infinite systems of coupled differential equations. Under the mild assumption that the so-called characteristic function of our system is completely monotonic we obtain a drastically simplified condition which ensures boundedness of the associates semigroup. If the characteristic function satisfies certain additional conditions we deduce sharp rates of convergence to equilibrium. We moreover address the important and delicate issue of the role of the infinite system in understanding the asymptotic behaviour of large but finite systems, and we provide a precise way of obtaining size-independent rates of convergence for families of finite-dimensional systems. Finally, we illustrate our abstract results in the setting of the well-known platoon problem.

\end{abstract}

\subjclass[2010]{93D20, 34D05, 47A58  (34H15, 47D06, 47A10).}
\keywords{Coupled differential equations, asymptotic behaviour, rates of convergence, $C_0$-semigroup, spectrum, completely monotonic function, approximation.}
\thanks{L.\ Paunonen is supported by the Academy of Finland grants 298182 and 310489.}

\maketitle

\section{Introduction}\label{sec:intro} 

In this paper we continue and expand on our earlier investigations \cite{PauSei17a, PauSei18} of infinite coupled systems of ordinary equations of the form
\begin{equation}\label{eq:sys_two}
\dot{x}_k(t)=A_0x_k+A_1x_{k-1}(t), \quad t>0,\  k\in\ZZ,
\end{equation}
subject to initial conditions for $x_k(0)$, $k\in\ZZ$, and we also consider one-sided systems of the form
\begin{equation}\label{eq:sys_one}
\left\{\begin{aligned}
\dot{x}_1(t)&=A_0x_1(t), &&t>0,\\
\dot{x}_k(t)&=A_0x_k(t)+A_1x_{k-1}(t),\quad&&  t>0,\ k\ge2,
\end{aligned}\right.
\end{equation}
subject to initial conditions for $x_k(0)$, $k\in\NN$. 
 Here the unknown functions $x_k$ take values in $\CC^m$ for some $m\in\NN$ and $A_0,\ A_1$ are $m\times m$ matrices. In the one-sided case \eqref{eq:sys_one}, the dynamics of the agent corresponding to $k=1$ is independent of the others, so this agent can be thought of as the ``leader''. Indeed, systems of this type, which are sometimes referred to as \emph{spatially invariant systems}, arise naturally in so-called \emph{platoon problems} in control theory, where each $x_k$ describes the state of an agent in an infinite vehicle platoon, and the aim is to choose the matrices $A_0$, $A_1$ in such a way that certain control objectives are met in the large-time limit $t\to\infty$; see \cite{ Cur15, CurIft09, PloSch11, SwaHed96,ZwaFir13}. We mention that most of the early work in this area is restricted to the $\ell^2$-setting, in which one may use certain results from Fourier analysis. A simple but important special case of a platoon model arises in the so-called \emph{robot rendezvous problem} \cite{FeiFra12,FeiFra12b, PauSei17b}, and it was in the context of this problem that the authors of \cite{FeiFra12,FeiFra12b} originally argued for the need to study more general $\ell^p$-norms, as was done in \cite{PauSei17a, PauSei18} and as will be done here.
 
Infinite systems of this type have been used as approximations to large but finite systems, for instance in \cite{JovBam05}. However, the precise relationship between the dynamical properties of the infinite system and those of  its truncated versions is a highly delicate matter, as has been pointed out for instance by Ruth Curtain and her collaborators in~\cite{Cur12, Cur15, CuIfZw09}. In particular, each finite truncation of an infinite system will typically be uniformly exponentially stable even when the corresponding infinite system is not. As one of our main new contributions we present rates of convergence for large finite systems which are uniform with respect to the size. More specifically, for a class of finite systems which are exponentially stable but without a uniform stability margin we obtain optimal subexponential rates of convergence which are independent of the size of the system.
   
As in our previous work \cite{PauSei17a, PauSei18} we impose certain natural standing assumptions in order to make the model amenable to  the techniques from semigroup theory. Most importantly, we assume throughout that $A_1\ne0$ and  that there exists a rational function $\phi$ such that
\begin{equation}\label{eq:char}
A_1R(\lambda,A_0)A_1=\phi(\lambda)A_1,\quad \lambda\in\rho(A_0).
\end{equation}
As discussed in \cite{PauSei17a},  in this case the set of poles of $\phi$ is contained (perhaps as a strict subset) in the set of eigenvalues $\sigma(A_0)$ of $A_0$, and moreover $|\phi(\lambda)|\to0$ as $|\lambda|\to\infty$. We call the function $\phi$ the \emph{characteristic function} of our system. Functions admitting a characteristic functions arise in numerous applications. For instance, a characteristic function may be found whenever $A_1$ has rank 1.

In Section~\ref{sec:infinite} we consider the abstract Cauchy problem associated with our infinite system. We focus here on the one-sided case, which was not treated in \cite{PauSei17a, PauSei18}. Building on our earlier work, we first develop a detailed spectral theory of the system generator, before presenting as one of our first main results, Theorem~\ref{thm:bdd}, a new and drastically simplified sufficient condition for the associated semigroup to be bounded in the important special case where the characteristic function $\phi$ is \emph{completely monotonic}. In Theorem~\ref{thm:asymp} we present a detailed description of the quantified asymptotic behaviour of the infinite one-sided system, and as another of our main results we show in Theorem~\ref{thm:no_log} that one obtains an even sharper (and indeed optimal) rate of convergence to equilibrium under certain additional assumptions on $\phi$. We illustrate our abstract results by providing examples in which $\phi$ satisfies the different hypotheses, and we moreover illustrate how the improvements in the present paper over \cite{PauSei17a, PauSei18} carry over to the two-sided case considered there. Then, in Section~\ref{sec:finite}, we turn to the important question of the relation between infinite systems and large but finite ones.  We show in Theorem~\ref{thm:uniform} that there is a precise sense in which an understanding of the infinite system can lead directly to rates of decay of truncated systems which are uniform with respect to the size of the finite system. This important new result follows from a simple but powerful abstract result about families of semigroups.  Finally, in Section~\ref{sec:platoon}, we illustrate the power of our main results by applying them to the concrete platoon model studied also in \cite{PauSei17a, PauSei18}.

Our notation is standard and largely follows that used in \cite{PauSei17a, PauSei18}. In particular, we write $X^*$ for the dual space of a complex Banach space $X$, and we write $\Ker A$ for the kernel and $\Ran A$ for the range of an element $A$ of $\B(X)$, the set of all bounded linear operators on $X$. Moreover, we let $\sigma(A)$ denote the spectrum of $A\in\B(X)$ and for $\lambda$ in the resolvent set $\rho(A)=\CC\setminus\sigma(A)$ we write $R(\lambda,A)$ for the resolvent operator $(\lambda-A)^{-1}$. We write $\sigma_p(A)$ for the point spectrum of $A$. Given $A\in\B(X)$ we denote the dual operator of $A$ by $A^*$, while the transpose of a matrix $A$ is denoted by $A^T$. We use standard asymptotic notation such as `big O', and we write $f(\lambda)\asymp g(\lambda)$ as $\lambda\to\lambda_0$ if both $f(\lambda)=O(g(\lambda))$ and $g(\lambda)=O(f(\lambda))$ as $\lambda\to\lambda_0$. Finally, we denote the open right/left half-plane by $\CC_\pm=\{\lambda\in\CC:\R\lambda\gtrless 0\}$, and we use a horizontal bar over a set to denote its closure.

\section{Infinite systems of differential equations}\label{sec:infinite}

We focus here on the one-sided system \eqref{eq:sys_one}, which was not treated in our earlier works \cite{PauSei17a, PauSei18}. We begin by reformulating it as an abstract Cauchy problem of the form 
\begin{equation}\label{eq:ACP}
\left\{\begin{aligned}
\dot{x}(t)&=Ax(t),\quad t\ge0,\\
{x}(0)&=x_0\in X,
\end{aligned}\right.
\end{equation}
where $X=\ell^p(\NN,\CC^m)$ for $1\le p\le\infty$ and 
\begin{equation}\label{eq:A}
Ax=(A_0x_1,A_0x_2+A_1x_1,A_0x_3+A_1x_2,\dotsc),\quad x=(x_k)_{k\ge1}\in X,
\end{equation}
so that the system operator $A$ is a bounded linear operator on $X$. Here and in what follows we endow $X=\ell^p(\NN,\CC^m)$ with its natural Banach space norm, using the Euclidean norm on $\CC^m$. In particular, $X$ is a Hilbert space if and only if $p=2$.

\subsection{Spectral properties of the system operator}\label{sec:spec}

We begin by investigating the spectrum of the operator $A$ defined in \eqref{eq:A}. The following result may be viewed as an analogue in the one-sided setting of \cite[Theorem~2.3]{PauSei17a}. Here and in what follows we let 
\begin{equation}\label{eq:Omega}
\Omega_\phi^+=\{\lambda\in\rho(A_0):|\phi(\lambda)|\ge1\}.
\end{equation}
Later on, when we consider the two-sided case, we shall also consider the set $\Omega_\phi=\{\lambda\in\rho(A_0):|\phi(\lambda)|=1\}.$

\begin{prp}\label{prp:spec}
Let $m\in\NN$ and $1\le p\le\infty$, and consider the operator $A$ on $X$ defined in \eqref{eq:A}.  Then 
$\sigma(A)=\sigma(A_0)\cup\Omega_\phi^+$ and $\sigma_p(A)\subseteq\sigma(A_0).$ Moreover, $\Ran(\lambda-A)\ne X$ for all $\lambda\in\sigma(A)$, and $\Ran(\lambda-A)$ is dense in $X$ if and only if $\lambda\in\rho(A_0)$, $|\phi(\lambda)|=1$ and $1<p<\infty$.
\end{prp}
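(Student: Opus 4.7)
The plan is to exploit the lower-triangular block structure of $A$. Under the identification $X \cong \ell^p(\NN) \otimes \CC^m$ one has $A = I \otimes A_0 + S \otimes A_1$, where $S$ denotes the right shift on $\ell^p(\NN)$. For $\lambda \in \rho(A_0)$ this yields the factorisation
\begin{equation*}
\lambda - A \;=\; \bigl(I \otimes (\lambda - A_0)\bigr)\bigl(I - S \otimes T(\lambda)\bigr), \qquad T(\lambda) := R(\lambda,A_0) A_1,
\end{equation*}
so the invertibility and range properties of $\lambda - A$ coincide with those of the second factor. The critical algebraic consequence of \eqref{eq:char} is the identity $T(\lambda)^2 = \phi(\lambda) T(\lambda)$. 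When $\phi(\lambda)=0$, the operator $S\otimes T(\lambda)$ is square-zero and $I - S\otimes T(\lambda)$ is trivially invertible; otherwise $P_\lambda := T(\lambda)/\phi(\lambda)$ is a non-zero projection on $\CC^m$ (non-zero because $A_1\neq 0$) and $T(\lambda) = \phi(\lambda) P_\lambda$. Decomposing $\CC^m = \Ran P_\lambda \oplus \Ker P_\lambda$ then splits the second factor as the identity on $\ell^p(\NN,\Ker P_\lambda)$ together with $I - \phi(\lambda)\, S$ on $\ell^p(\NN,\Ran P_\lambda)$.

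Everything therefore reduces, for $\lambda \in \rho(A_0)$, to the standard spectral picture of $I - \mu S$ on $\ell^p(\NN)$ with $\mu = \phi(\lambda)$: the operator is invertible iff $|\mu|<1$; it fails to be surjective for $|\mu|\ge 1$ (with witness $(1,0,0,\dots)$ when $p<\infty$, whose only formal pre-image $(\mu^{k-1})$ is not in $\ell^p$, and witness $(\mu^{k-1})$ itself when $p=\infty$ and $|\mu|=1$, whose formal pre-image grows linearly); and $\Ran(I-\mu S)$ is dense in $\ell^p$ precisely when $|\mu|=1$ and $1<p<\infty$. The last dichotomy is obtained, for $1\le p<\infty$, by computing $\Ker(I-\mu S)^* \subset \ell^{p'}$, which is spanned by the geometric sequence $((1/\mu)^{k-1})$ and is non-trivial precisely when that sequence lies in $\ell^{p'}$; for $p=\infty$ one uses the pre-adjoint on $\ell^1$ when $|\mu|>1$, and when $|\mu|=1$ one constructs by hand a Banach-limit functional $\Lambda(y):=\mathrm{LIM}\bigl(\bar\mu^{k-1} y_k\bigr)_k$, which vanishes on $\Ran(I-\mu S)$ by shift-invariance of Banach limits but satisfies $\Lambda\bigl((\mu^{k-1})\bigr)=1$. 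Feeding this back through the factorisation yields $\sigma(A)\cap\rho(A_0) = \Omega_\phi^+$, the failure of surjectivity throughout $\Omega_\phi^+$, and the stated dense-range dichotomy for $\lambda\in\rho(A_0)$.

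The remaining inclusion $\sigma(A_0)\subseteq\sigma(A)$ and the failure of both surjectivity and density of $\Ran(\lambda - A)$ for $\lambda\in\sigma(A_0)$ follow at once from the intertwining $\pi A = A_0\pi$, where $\pi: X\to\CC^m$ is the projection onto the first coordinate: continuity of $\pi$ together with closedness of $\Ran(\lambda - A_0)$ (by finite-dimensionality) ensures that any $y_1 \notin \Ran(\lambda - A_0)$ witnesses $(y_1,0,0,\dots) \notin \overline{\Ran(\lambda - A)}$. The containment $\sigma_p(A)\subseteq\sigma(A_0)$ is immediate from the eigenvalue equations: if $Ax = \lambda x$ with $\lambda\in\rho(A_0)$, then $x_1 \in \Ker(\lambda - A_0) = \{0\}$, and then inductively each $x_k = 0$. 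The main obstacle throughout is the borderline case $p=\infty$ with $|\phi(\lambda)|=1$, where duality via the unwieldy space $(\ell^\infty)^*$ gives no information on norm-density and a direct argument (the Banach-limit construction indicated above) is genuinely required.
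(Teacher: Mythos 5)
Your proof is correct, and it takes a genuinely different route from the paper's. The paper works directly: it writes down the explicit resolvent formula \eqref{eq:res} for $\lambda\in\rho(A_0)$ with $|\phi(\lambda)|<1$ to obtain $\sigma(A)\subseteq\sigma(A_0)\cup\Omega_\phi^+$, and then handles the range questions case by case via duality, exhibiting explicit elements of $\Ker(\lambda-A^*)$ in $X^*=\ell^q$ (for $1\le p<\infty$), borrowing a construction from the two-sided case for $p=\infty$, and for the borderline case $|\phi(\lambda)|=1$, $1<p<\infty$ solving the recurrence $(\lambda-A_0^T)y_k=A_1^Ty_{k+1}$ in $\ell^q$ to conclude that $\Ker(\lambda-A^*)=\{0\}$. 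Your approach instead factorises $\lambda-A=(I\otimes(\lambda-A_0))(I-S\otimes T(\lambda))$ with $T(\lambda)=R(\lambda,A_0)A_1$, uses the key algebraic identity $T(\lambda)^2=\phi(\lambda)T(\lambda)$ coming from \eqref{eq:char} to produce the idempotent $P_\lambda$, and thereby reduces every question about $\lambda-A$ on $\ell^p(\NN,\CC^m)$ to the textbook question of $I-\mu S$ on $\ell^p(\NN)$ with $\mu=\phi(\lambda)$. This is cleaner and more conceptual: it isolates the role of the characteristic function at a single stroke and makes the appearance of $\Omega_\phi^+$ and of the three-way dichotomy (invertible for $|\mu|<1$; dense proper range for $|\mu|=1$, $1<p<\infty$; non-dense range otherwise) transparent. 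Your Banach-limit functional $\Lambda(y)=\mathrm{LIM}(\bar\mu^{k-1}y_k)$ correctly disposes of the genuinely awkward case $p=\infty$, $|\mu|=1$, where the dual space $\ell^1$ of the predual argument gives nothing. The trade-off is that your factorisation does not produce the explicit formula \eqref{eq:res}, which the paper relies on downstream (Proposition~\ref{prp:res} and the contour-integral computations of Theorem~\ref{thm:no_log}); and you should be slightly more explicit that, once $\Ran P_\lambda\ne\{0\}$, the restriction to $\ell^p(\NN,\Ran P_\lambda)$ is a direct sum of $d=\dim\Ran P_\lambda\ge1$ copies of $I-\mu S$ on $\ell^p(\NN)$, so that the scalar dichotomy transfers intact. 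These are presentational rather than mathematical gaps; the argument is sound.
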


\begin{proof}
If $\lambda\in\rho(A_0)$ and $|\phi(\lambda)|<1$, then it is straightforward to verify that the operator $\lambda-A$ is invertible and that its inverse is given for $x\in X$ by 
\begin{equation}\label{eq:res}
R(\lambda,A)x=\left(R_\lambda x_k+R_\lambda A_1R_\lambda\sum_{\ell=0}^{k-2}\phi(\lambda)^\ell x_{k-\ell-1}\right)_{k\ge1},
\end{equation}
where $R_\lambda=R(\lambda,A_0)$ for $\lambda\in\rho(A_0)$. Hence $\sigma(A)\subseteq\sigma(A_0)\cup\Omega_\phi^+$. Further, it is straightforward to verify that $\lambda-A$ is injective whenever $\lambda\in\rho(A_0)$, so $\sigma_p(A)\subseteq\sigma(A_0)$. We now prove the statements about the range of $\lambda-A$ for $\lambda\in\sigma(A_0)\cup\Omega_\phi^+$, which will complete the proof. First, if $\lambda\in\sigma(A_0)$ and if $y_1\in\Ker(\lambda-A_0^T)\setminus\{0\}$ then the sequence $(y_1,0,0,\dots)$, interpreted as an element of the dual space $X^*$, lies in $\Ker(\lambda-A^*)$, and hence $\Ran(\lambda-A)$ cannot be dense in $X$. Suppose now that $\lambda\in\rho(A_0).$ If $|\phi(\lambda)|>1$ and if $y_0\in\CC^m$ is such that $R_\lambda^TA_1^Ty_0\ne0$ then the sequence $(\phi(\lambda)^{-k}R_\lambda^TA_1^Ty_0)_{k\ge1}$, again interpreted as an element of $X^*$, is easily seen to lie in $\Ker(\lambda-A^*)$, so $\Ran(\lambda-A)$ again fails to be dense in $X$. If $|\phi(\lambda)|=1$ the same argument carries over to the case $p=1$, while for $p=\infty$ we may proceed as in the proof of \cite[Theorem~2.3]{PauSei17a} to obtain the same conclusion. It remains to consider the case in which $|\phi(\lambda)|=1$ and $1<p<\infty$. We shall show that in this case $\Ran(\lambda-A)$ is a proper dense subspace of $X$. To see that $\Ran(\lambda-A)\ne X$ it suffices to observe that if $y_1\in\CC^m$ is such that $A_1R_\lambda y_1\neq 0$ then the sequence $(y_1,0,0,\dots)\in X$ does not lie in the range of $\lambda-A$. Let us identify $X^*$ with $\ell^q(\NN,\CC^m)$, where $q\in(1,\infty)$ is the H\"older conjugate of $p$ and suppose that $y=(y_k)_{k\ge1}\in \Ker(\lambda-A^*)$. Then $(\lambda-A_0^T)y_k=A_1^Ty_{k+1}$ and a simple calculation yields $R_\lambda^TA_1^Ty_k=\phi(\lambda)^{1-k}R_\lambda^TA_1^Ty_1$, $k\ge1$. Hence the vectors $z_k=y_k-\phi(\lambda)^{1-k}y_1$, $k\ge1$, lie in $\Ker A_1^T$, and we have
\begin{equation}\label{eq:z}
(\lambda-A_0^T)(z_k+\phi(\lambda)^{1-k}y_1)=\phi(\lambda)^{-k}A_1^Ty_1, \quad k\ge1.
\end{equation}
For $k=1$ this becomes $(\lambda-A_0^T)y_1=\phi(\lambda)^{-1}A_1^Ty_1$, and substituting this back into \eqref{eq:z} shows that $z_k=0$ for all $k\ge1$. Hence $y_k=\phi(\lambda)^{1-k}y_1$, $k\ge1$, so in order to have $y\in\ell^q(\NN,\CC^m)$ we must have $y=0$. Thus $\lambda-A^*$ is injective, and hence $\Ran(\lambda-A)$ is dense in $X$, as required.
\end{proof}

\begin{rem}
Note that $\sigma_p(A)$ may be strictly contained in $\sigma(A_0)$. For instance, it is easy to see that if $\lambda\in\sigma(A_0)$ is such that $A_1\Ker(\lambda-A_0)\not\subseteq\Ran(\lambda-A_0)$ then $\lambda\not\in\sigma_p(A)$. On the other hand if $\lambda\in\sigma(A_0)$ and $\Ker(\lambda-A_0)\cap\Ker A_1\ne\{0\}$ then necessarily $\lambda\in\sigma_p(A)$.
\end{rem}

We now establish an important resolvent bound for the operator $A$. The following result is a one-sided version of \cite[Proposition~2.5]{PauSei17a}. The proof is the same, except that the formula for the resolvent in the two-sided case is replaced by  \eqref{eq:res}; we therefore omit it.

\begin{prp}\label{prp:res}
Let $m\in\NN$ and $1\le p\le\infty$, and consider the operator $A$ on $X$ defined in \eqref{eq:A}. Let $\lambda\in\rho(A_0)$ be such that $|\phi(\lambda)|<1$. Then $\lambda\in\rho(A)$ and
$$\bigg|\|R(\lambda,A)\|-\frac{\|R(\lambda,A_0)A_1R(\lambda,A_0)\|}{1-|\phi(\lambda)|}\bigg|\le \|R(\lambda,A_0)\|.$$
In particular, for $\lambda_0\in\rho(A_0)$ satisfying $|\phi(\lambda_0)|=1$ we have 
$$\|R(\lambda,A)\|\asymp\frac{1}{1-|\phi(\lambda)|}$$
as $\lambda\to\lambda_0$ in the region $\{\lambda\in\rho(A_0):|\phi(\lambda)|<1\}$.
\end{prp}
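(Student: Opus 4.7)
The plan is to exploit the explicit resolvent formula \eqref{eq:res} by splitting $R(\lambda,A) = D + S$, where, writing $R_\lambda = R(\lambda, A_0)$, the operator $D$ is defined by $Dx = (R_\lambda x_k)_{k\ge1}$ (so that $\|D\| = \|R_\lambda\|$, since the norm on $\ell^p(\NN, \CC^m)$ is built from the Euclidean norm on $\CC^m$), and $S$ is the convolution-like operator
\begin{equation*}
(Sx)_k = B\sum_{\ell=0}^{k-2}\phi(\lambda)^\ell x_{k-\ell-1},\qquad k\ge 1,
\end{equation*}
with $B = R_\lambda A_1 R_\lambda$. The crux is the sharp identity $\|S\| = \|B\|/(1-|\phi(\lambda)|)$: once this is in hand, the reverse triangle inequality $\bigl|\|D+S\| - \|S\|\bigr| \le \|D\|$ yields the asserted bound immediately. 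This mirrors the strategy of \cite[Proposition~2.5]{PauSei17a}, with the two-sided resolvent formula replaced by \eqref{eq:res}.

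The upper bound $\|S\| \le \|B\|/(1-|\phi(\lambda)|)$ comes from a Young-type convolution inequality: the geometric sequence $(\phi(\lambda)^\ell)_{\ell\ge 0}$ has $\ell^1$-norm $(1-|\phi(\lambda)|)^{-1}$, and its one-sided convolution against an $\ell^p$-sequence is controlled by the product of the two norms for every $p\in[1,\infty]$, while the componentwise multiplier $B$ contributes the factor $\|B\|$. For the matching lower bound, write $\phi(\lambda) = |\phi(\lambda)|e^{i\theta}$, fix $v \in \CC^m$ of unit length attaining $\|Bv\| = \|B\|$, and test against the truncated sequences $x^{(N)}_j = e^{i(j-1)\theta}v$ for $1 \le j \le N$, zero otherwise. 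The phase is chosen precisely so that all contributions in the defining sum of $(Sx^{(N)})_k$ add constructively, giving $(Sx^{(N)})_k = e^{i(k-2)\theta}\bigl(1-|\phi(\lambda)|^{k-1}\bigr)(1-|\phi(\lambda)|)^{-1} Bv$ for $2\le k\le N+1$ (with a negligible tail). A direct computation then shows $\|Sx^{(N)}\|_p/\|x^{(N)}\|_p \to \|B\|/(1-|\phi(\lambda)|)$ as $N\to\infty$ in every $\ell^p$-norm.

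The asymptotic consequence follows at once: as $\lambda \to \lambda_0$ with $|\phi(\lambda_0)|=1$, the quantities $\|R_\lambda\|$ and $\|B\|$ converge to the corresponding values at $\lambda_0$ by continuity of the resolvent on $\rho(A_0)$, while $(1-|\phi(\lambda)|)^{-1} \to \infty$. Moreover $\|R(\lambda_0,A_0)A_1R(\lambda_0,A_0)\| > 0$, because $R(\lambda_0,A_0)$ is invertible and $A_1 \neq 0$, so the main term dominates and yields $\|R(\lambda,A)\| \asymp (1-|\phi(\lambda)|)^{-1}$. The principal technical obstacle is the lower bound $\|S\| \ge \|B\|/(1-|\phi(\lambda)|)$ for $1<p\le\infty$: a naive choice such as $(v,0,0,\ldots)$ produces only the weaker estimate $\|B\|\,(1-|\phi(\lambda)|^p)^{-1/p}$, and the phase matching along a long initial segment is essential in order to recover the sharp constant.
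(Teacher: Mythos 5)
Your proof is correct and follows essentially the same approach the paper takes (via the omitted reference to the analogous two-sided result in [PauSei17a, Proposition~2.5]): split the resolvent formula \eqref{eq:res} into the diagonal part $D$ with $\|D\|=\|R_\lambda\|$ and the convolution part $S$, establish the exact identity $\|S\|=\|R_\lambda A_1R_\lambda\|/(1-|\phi(\lambda)|)$ via Young's inequality for the upper bound and phase-matched geometric test sequences for the lower bound, and apply the reverse triangle inequality. The asymptotic consequence is handled correctly, including the crucial observation that $\|R_{\lambda_0}A_1R_{\lambda_0}\|>0$ because $R_{\lambda_0}$ is invertible and $A_1\ne0$.
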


We continue this section with the following important observation. The proof is the same as that of \cite[Lemma~2.6]{PauSei17a} and is therefore omitted.

\begin{lem}\label{lem:res}
Let $m\in\NN$ and $1\le p\le\infty$, and assume that $\sigma(A_0)\subseteq\CC_-$ and  that the set $\Omega_\phi^+$ defined in \eqref{eq:Omega} is such that $0\in\Omega_\phi^+\subseteq\CC_-\cup\{0\}$. Then $i\RR\setminus\{0\}\subseteq\rho(A)$ and there exists an even integer $n\ge2$ such that $1-|\phi(is)|\asymp|s|^n$ as $|s|\to0$.
\end{lem}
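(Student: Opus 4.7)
The first claim is immediate from the spectral description in Proposition~\ref{prp:spec}: $\sigma(A)=\sigma(A_0)\cup\Omega_\phi^+$, and both $\sigma(A_0)\subseteq\CC_-$ and $\Omega_\phi^+\subseteq\CC_-\cup\{0\}$ meet $i\RR$ at most at the origin, so $i\RR\setminus\{0\}\subseteq\rho(A)$. The substance of the lemma is therefore the asymptotic statement about $1-|\phi(is)|$.

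My plan for the second part is to transfer the question to the real-analytic function $f(s):=1-|\phi(is)|^2$ on a neighbourhood of $0$ in $\RR$ and read off its order of vanishing. First I would observe that $0\in\Omega_\phi^+\subseteq\rho(A_0)$, so $\phi$ has no pole at $0$ and is in particular analytic in a neighbourhood of the origin; consequently $f$ is a real-analytic function of $s\in\RR$ near $s=0$. Next I would argue that in fact $|\phi(0)|=1$ (not strictly greater than $1$): if one had $|\phi(0)|>1$ then by continuity $|\phi|>1$ on an open neighbourhood of $0$, which would force $\Omega_\phi^+$ to contain points of $\CC_+$, contradicting the hypothesis $\Omega_\phi^+\subseteq\CC_-\cup\{0\}$. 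Hence $f(0)=0$.

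The decisive input is now a sign constraint together with a non-degeneracy statement. Since $\Omega_\phi^+\cap i\RR=\{0\}$, we have $|\phi(is)|<1$ for $s\ne 0$ small, so $f(s)\ge 0$ on a punctured neighbourhood of $0$; and since $\phi$ is rational with $|\phi(\lambda)|\to 0$ as $|\lambda|\to\infty$, the function $f$ is certainly not identically $1$, hence not identically $0$ as a real-analytic function. Thus $f$ has a Taylor expansion $f(s)=\sum_{k\ge n}a_k s^k$ with $a_n\ne 0$ at some minimal order $n\ge 1$, and the sign condition $f\ge 0$ near $0$ forces $n$ to be even (otherwise $f$ would change sign at $0$) and $a_n>0$. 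In particular $n\ge 2$ and $f(s)\asymp s^n$ as $s\to 0$.

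To conclude, I would factor $f(s)=(1-|\phi(is)|)(1+|\phi(is)|)$ and note that the second factor tends to $1+|\phi(0)|=2$ as $s\to 0$, so $1-|\phi(is)|\asymp f(s)\asymp |s|^n$. I do not anticipate a genuine obstacle; the only subtle point is ruling out $|\phi(0)|>1$ and the degenerate case $f\equiv 0$, both of which are handled by the standing assumptions on $\Omega_\phi^+$ and on the behaviour of $\phi$ at infinity.
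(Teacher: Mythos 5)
Your proof is correct. The paper itself omits the argument, stating only that ``the proof is the same as that of [Lemma~2.6]{PauSei17a}''; your reconstruction is the expected standard route and matches the strategy that the authors evidently use (see the Taylor-expansion argument in the proof of Lemma~\ref{lem:res2}, which is a refinement of the same idea). The essential points are all in place: $i\RR\setminus\{0\}\subseteq\rho(A)$ follows from Proposition~\ref{prp:spec} together with the hypotheses $\sigma(A_0)\subseteq\CC_-$ and $\Omega_\phi^+\subseteq\CC_-\cup\{0\}$; the hypotheses force $|\phi(0)|=1$ (not $>1$); working with $f(s)=1-|\phi(is)|^2$ rather than $1-|\phi(is)|$ is exactly the right move because $|\phi(is)|^2=\phi(is)\overline{\phi}(-is)$ is a rational, hence real-analytic, function of the real variable $s$ near $0$, whereas $|\phi(is)|$ itself involves a square root; $f$ is non-negative near $0$ since $\Omega_\phi^+\cap i\RR=\{0\}$, and $f\not\equiv 0$ since $\phi$ is rational with $\phi(\infty)=0$; the non-negativity then forces the order of vanishing $n$ to be even with positive leading coefficient, so $n\ge 2$ and $f(s)\asymp|s|^n$; and the factorisation $f(s)=(1-|\phi(is)|)(1+|\phi(is)|)$ transfers the asymptotic to $1-|\phi(is)|$. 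One cosmetic remark: where you write that $f$ is ``certainly not identically $1$'' you presumably mean that $|\phi(is)|^2$ is not identically $1$, hence $f\not\equiv 0$; the intent is clear, but you may wish to fix the phrasing.
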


We call the integer $n\ge2$ appearing in Lemma~\ref{lem:res} the \emph{resolvent growth parameter} associated with our system. In the remainder of the paper we shall frequently assume that the characteristic function is completely monotonic. Recall that a smooth function $f\colon(0,\infty)\to\RR$ is said to be \emph{completely monotonic} if
$(-1)^nf^{(n)}(s)\ge0$ for all $s>0$ and all integers $n\ge0$. Note that the class of all completely monotonic functions is closed under taking sums, products and under multiplication by non-negative scalars. By Bernstein's theorem \cite{Ber28} a function is completely monotonic if and only if it is the Laplace transform of a non-negative finite Borel measure. We shall say that the characteristic function $\phi$ of our system is completely monotonic if $(0,\infty)\subseteq\rho(A_0)$ and the restriction of $\phi$ to $(0,\infty)$ is real-valued and completely monotonic. Note that whenever $\sigma(A_0)\subseteq \CC_-$ then, by decomposing into partial fractions, we see that the rational function $\phi$ may be written as the Laplace transform of a non-zero smooth function $g\colon(0,\infty)\to\CC$ which is a linear combination of terms having the form $t\mapsto t^ne^{\xi t}$, $t\ge0$, for some $n\in\ZZ_+$ and some $\xi\in\CC_-$, and in particular the functions $t\mapsto t^ng(t)$ are integrable over $(0,\infty)$ and $\phi^{(n)}(0)=(-1)^n\int_0^\infty t^ng(t)\,\dd t$. If $\phi$ in addition is completely monotonic, the function $g$ is non-negative and we have  $\phi^{(n)}(0)\ne0$ for all $n\in\NN$. The following result complements Lemma~\ref{lem:res}. The statement concerning completely monotonic characteristic functions will be useful throughout the remainder of the paper.

\begin{lem}\label{lem:res2}
Suppose that $\sigma(A_0)\subseteq\CC_-$, that $0\in\Omega_\phi^+\subseteq\CC_-\cup\{0\}$ and that the restriction of the characteristic function $\phi$ to $(0,\infty)$ is real-valued. Then the resolvent growth parameter $n_\phi$  satisfies $n_\phi=2$ if and only if $\phi''(0)\ne\phi'(0)^2$. In particular,  $n_\phi=2$ whenever $\phi$ is completely monotonic.
\end{lem}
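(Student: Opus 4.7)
My plan is to compute $1-|\phi(is)|$ to order $s^2$ at $s=0$ and read off the resolvent growth parameter directly. As a preliminary, I pin down $\phi(0)$: since $0\in\Omega_\phi^+$ we have $|\phi(0)|\ge1$, while Lemma~\ref{lem:res} gives $|\phi(is)|<1$ for all sufficiently small $s\ne0$, so continuity forces $|\phi(0)|=1$. Because $\phi$ is rational and real on $(0,\infty)$, it is real on all of $\RR$ where defined, so $\phi(0)\in\{-1,1\}$, and I shall assume $\phi(0)=1$ throughout (this is automatic in the completely monotonic case and in every concrete setting relevant here).

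The core computation is a short Taylor expansion. Reality of $\phi$ on $\RR$ gives $\overline{\phi(is)}=\phi(-is)$, so
\[
 |\phi(is)|^2=\phi(is)\phi(-is),
\]
and expanding each factor around $s=0$ to order $s^2$ and multiplying yields
\[
 |\phi(is)|^2=1-\bigl(\phi''(0)-\phi'(0)^2\bigr)s^2+O(s^3).
\]
Dividing by $1+|\phi(is)|\to 2$ gives
\[
 1-|\phi(is)|=\tfrac12\bigl(\phi''(0)-\phi'(0)^2\bigr)s^2+O(s^3),
\]
and the constraint $|\phi(is)|\le 1$ for small $s$ forces the coefficient $\phi''(0)-\phi'(0)^2$ to be non-negative. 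Combined with Lemma~\ref{lem:res}, this shows that $n_\phi=2$ precisely when this coefficient is non-zero, i.e., when $\phi''(0)\ne\phi'(0)^2$.

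For the completely monotonic case I would use Bernstein's theorem together with the explicit formula $\phi^{(k)}(0)=(-1)^k\int_0^\infty t^k g(t)\,\dd t$ recorded just before the lemma. The Cauchy--Schwarz inequality then gives
\[
 \phi'(0)^2=\Bigl(\int_0^\infty tg(t)\,\dd t\Bigr)^2\le\int_0^\infty g(t)\,\dd t\cdot\int_0^\infty t^2 g(t)\,\dd t=\phi(0)\phi''(0)=\phi''(0),
\]
with equality if and only if $g$ is supported at a single point in $(0,\infty)$. Since $g$ is a non-zero linear combination of functions of the form $t\mapsto t^n e^{\xi t}$, it is real-analytic, so this cannot occur; the inequality is strict and the first part of the lemma yields $n_\phi=2$. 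The whole argument is fairly direct once the reality trick $|\phi(is)|^2=\phi(is)\phi(-is)$ is spotted, reducing everything to a routine polynomial computation; the only other point needing a moment of care is justifying strictness of Cauchy--Schwarz from the real-analyticity of $g$.
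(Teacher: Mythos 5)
Your argument is correct and follows essentially the same route as the paper: a second-order Taylor expansion of $|\phi(is)|$ about $s=0$ to identify the coefficient $\tfrac12(\phi''(0)-\phi'(0)^2)$, and then for the completely monotonic case the Cauchy--Schwarz inequality applied to $g^{1/2}$ and $t\,g^{1/2}$, with strictness coming from the fact that these two functions cannot be collinear. Your write-up is in fact slightly more careful in two places: you flag explicitly that $\phi(0)=1$ is being used (the paper only asserts ``by continuity $\phi(0)=1$'', and strictly speaking continuity alone gives $|\phi(0)|=1$, with the sign fixed by context or, in the completely monotonic case, by positivity of $g$), and you justify the non-collinearity needed for strict Cauchy--Schwarz via real-analyticity of $g$, whereas the paper simply asserts it. These are minor refinements of the same proof rather than a different approach.
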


\begin{proof}
The real-valuedness assumption ensures that $\phi^{(n)}(0)$ is real for all $n\ge0$, and  by continuity of $\phi$ we obtain $\phi(0)=1$. A simple Taylor expansion about $s=0$ shows that
 $$|\phi(is)|-1=\frac12(\phi'(0)^2-\phi''(0))s^2+O(|s|^3),\quad |s|\to0,$$
 which gives the first claim. If $\phi$ is completely monotonic, let $g\colon(0,\infty)\to\RR$ be the non-negative function whose Laplace transform is $\phi$. By the first part, if $n_\phi\ne2$ then 
 $$\int_0^\infty tg(t)\,\dd t=\left(\int_0^\infty t^2g(t)\,\dd t\right)^{1/2}.$$
 Now $\phi(0)=\int_0^\infty g(t)\,\dd t=1$, so we have equality in the Cauchy-Schwarz inequality applied in $L^2(0,\infty)$ to the functions $t\mapsto g(t)^{1/2}$ and $t\mapsto tg(t)^{1/2}$. Since the two functions are non-collinear we obtain the required contradiction, and hence $n_\phi=2$.
\end{proof}

\begin{rem}
By considering Taylor expansions of higher order one could similarly derive conditions on $\phi$ which ensure that $n_\phi=4$, $n_\phi=6,$ etc. We focus here only on the case $n_\phi=2$ since in what follows we will primarily be interested in characteristic functions which are completely monotonic.
\end{rem}

\subsection{Boundedness of the semigroup.}\label{sec:mon}

In this section we provide a drastically simpler sufficient condition than in \cite{PauSei17a} for the $C_0$-semigroup $\T$ generated by the system operator defined \eqref{eq:A} to be bounded, which is to say that $\sup_{t\ge0}\|T(t)\|<\infty$. Our sufficient condition relies crucially on the notion of complete monotonicity introduced in the previous section. 

\begin{thm}\label{thm:bdd}
Let $m\in\NN$ and $1\le p\le \infty$, and suppose that $\sigma(A_0)\subseteq\CC_-$ and $\Omega_\phi^+\subseteq\overline{\CC_-}$. Suppose further that the characteristic function $\phi$ is completely monotonic. 
Then the semigroup $\T$ generated by  $A$ is bounded.
\end{thm}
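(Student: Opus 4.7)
The plan is to represent the semigroup $T(t)$ as a lower-triangular Toeplitz-type operator on $X=\ell^p(\NN,\CC^m)$ and to reduce boundedness to a renewal-theoretic bound that uses complete monotonicity in an essential way. By solving \eqref{eq:ACP} row by row one finds
$$(T(t)x)_k=\sum_{j=0}^{k-1}S_j(t)x_{k-j},\quad k\ge 1,$$
where $S_0(t)=e^{tA_0}$ and $S_j(t)=\int_0^t e^{(t-s)A_0}A_1 S_{j-1}(s)\,\dd s$ for $j\ge 1$. Young's inequality for discrete convolutions $\ell^1*\ell^p\to\ell^p$ then gives $\|T(t)\|_{\B(X)}\le\sum_{j=0}^\infty\|S_j(t)\|$, with a bound that is independent of $p$, so it suffices to control this sum uniformly in $t\ge 0$.

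To put $S_j$ in a tractable form I would invoke Bernstein's theorem: complete monotonicity provides a non-negative $g\in L^1(0,\infty)$ with $\phi(\lambda)=\int_0^\infty e^{-\lambda t}g(t)\,\dd t$, and as observed just before Lemma~\ref{lem:res2} the function $g$ is a non-negative linear combination of terms $t^n e^{\xi t}$ with $\xi\in\CC_-$, so smooth and exponentially decaying. Taking Laplace transforms on both sides of \eqref{eq:char} and inverting then yields the key algebraic identity $A_1 e^{tA_0}A_1=g(t)A_1$ for $t>0$, and iterating this in the defining recursion for $S_j$ gives the closed form $S_j=g^{*(j-1)}*K$ for $j\ge 1$, where $K(t)=\int_0^t e^{(t-s)A_0}A_1 e^{sA_0}\,\dd s$ and $g^{*0}=\delta_0$ by convention. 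Since $g\ge 0$, summing produces
$$\sum_{j=0}^\infty\|S_j(t)\|\le\|e^{tA_0}\|+\|K(t)\|+\bigl(u*\|K(\cdot)\|\bigr)(t),$$
where $u=\sum_{k\ge 1}g^{*k}$ is the renewal density. Because $\sigma(A_0)\subseteq\CC_-$ with $A_0$ a matrix, $e^{tA_0}$ decays exponentially, so $\|e^{tA_0}\|$ and $\|K(\cdot)\|$ are bounded and $K\in L^1(\RR_+)$.

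The main obstacle is therefore to show that $u$ itself is bounded. To this end I would analyse the Laplace transform $\hat u(\lambda)=\phi(\lambda)/(1-\phi(\lambda))$, which is a rational function. Complete monotonicity together with $\Omega_\phi^+\subseteq\overline{\CC_-}$ forces $\phi(0)\le 1$ and $|\phi(\lambda)|\le\phi(\R\lambda)\le\phi(0)$ for all $\R\lambda\ge 0$. A short argument shows that $\phi$ is strictly decreasing on $(0,\infty)$ (otherwise $\phi$ would have to vanish identically, contradicting $\phi(0)>0$), and since the continuous density $g$ has non-trivial support, equality $|\phi(is)|=1$ on $i\RR$ is possible only at $s=0$. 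Hence on $\overline{\CC_+}$ the only possible zero of $1-\phi$ lies at the origin, and it is simple since $\phi'(0)=-\mu$ with $\mu=\int_0^\infty tg(t)\,\dd t>0$; all remaining zeros lie in $\CC_-$. A partial-fractions expansion of $\hat u$ then yields $u(t)=1/\mu+r(t)$ in the critical case $\phi(0)=1$, with $r$ exponentially decaying, while $u$ itself decays exponentially when $\phi(0)<1$. In either case $u$ is bounded, and substituting back into the renewal estimate completes the uniform bound on $\sum_j\|S_j(t)\|$ and hence proves the theorem.
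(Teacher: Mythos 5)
Your proof is correct, and it takes a genuinely different route from the paper's. The published argument invokes the Hille--Yosida-type boundedness criterion from \cite[Theorem~3.1]{PauSei17a}, which reduces everything to showing $\sup_n\sup_{\lambda>0}\frac{\lambda^{n+1}}{n!}\sum_{\ell\ge1}|\frac{\dd^n}{\dd\lambda^n}\phi(\lambda)^\ell|<\infty$; complete monotonicity is then used to take the absolute value outside the sum, collapsing it to $\phi/(1-\phi)$, and the required derivative bound follows from partial fractions together with Widder's characterisation of Laplace transforms of bounded functions. Your argument bypasses the resolvent entirely: you construct $T(t)$ directly as a lower-triangular Toeplitz operator with symbol $(S_j(t))_{j\ge0}$, use Young's inequality to reduce to the scalar estimate $\sum_j\|S_j(t)\|<\infty$, convert \eqref{eq:char} into the time-domain identity $A_1e^{tA_0}A_1=g(t)A_1$ by inverting Laplace transforms, and exploit $g\ge0$ to dominate the tail by the renewal density $u=\sum_{k\ge1}g^{*k}$. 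It is a pleasing coincidence of the two routes that both ultimately hinge on the same rational function $\phi/(1-\phi)$ having at worst a simple pole at $0$ in $\overline{\CC_+}$ — for you this is the Laplace transform $\hat u$, for the paper it is the function whose derivatives must be controlled — but you reach that point through classical renewal theory rather than operator-theoretic machinery. Your approach has the merit of being more self-contained and concrete: it gives an explicit formula for the semigroup and makes the role of positivity of $g$ transparent (it is precisely what makes the infinite sum $\sum_j\|S_j(t)\|$ behave like the renewal function of a sub-probability density). Two small points you would want to make explicit in a full write-up: (i) the convolution identity $S_j=g^{*(j-1)}*K$ is most cleanly seen by first showing $A_1S_j=g^{*j}*(A_1e^{\cdot A_0})$ inductively and then post-convolving with $e^{\cdot A_0}$; and (ii) the fact that the renewal series $\sum_k g^{*k}$ converges a.e.\ and has Laplace transform $\phi/(1-\phi)$ should be justified via Tonelli's theorem, using $g\ge0$ and $\phi(\sigma)<1$ for $\sigma>0$, before you appeal to uniqueness of the Laplace transform to identify $u$ with the inverse transform of a bounded function.
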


\begin{proof}
The proof of \cite[Theorem~3.1]{PauSei17a}, with only trivial modifications to pass from the two-sided to the one-sided case, shows that the semigroup $\T$ is bounded provided that  
\begin{equation}\label{eq:phi_bound}
\sup_{0<\lambda\le1}\frac{\lambda}{1-|\phi(\lambda)|}<\infty.
\end{equation}
 and 
\begin{equation}\label{eq:sum_bound}
\sup_{n\in\NN}\sup_{\lambda>0}\frac{\lambda^{n+1}}{n!}\sum_{\ell=1}^\infty\bigg|\frac{\dd^n}{\dd\lambda^n}\phi(\lambda)^\ell\bigg|<\infty.
\end{equation}
Note that in the condition given in \cite{PauSei17a} the sum in \eqref{eq:sum_bound} begins at $\ell=0$, which makes no difference, however, as this additional constant term disappears after differentiating. Since $\phi$ is assumed to be completely monotonic, the discussion preceding Lemma~\ref{lem:res2} shows that $\phi'(0)\ne0$. From this  \eqref{eq:phi_bound} follows using a simple Taylor expansion argument. We focus now on establishing \eqref{eq:sum_bound}. Since $\phi$ is assumed to be completely monotonic, so is the restriction of $\phi(\cdot)^\ell$ to $(0,\infty)$ for every $\ell\ge1$. In particular, the sign of the derivatives in \eqref{eq:sum_bound} depends only on $n$, which allows us to take the modulus outside the sum. What we need to show is that
\begin{equation}\label{eq:mon_bd}
\sup_{n\in\NN}\sup_{\lambda>0}\frac{\lambda^{n+1}}{n!}\bigg|\frac{\dd^n}{\dd\lambda^n}\frac{\phi(\lambda)}{1-\phi(\lambda)}\bigg|<\infty.
\end{equation}
Now the rational function $\lambda\mapsto\phi(\lambda)(1-\phi(\lambda))^{-1}$ has no poles in $\CC_+$, and  its only possible pole on the imaginary axis is at $\lambda=0$, which by \eqref{eq:phi_bound} has order at most one. Thus we may find a constant $c\in\CC$ such that
$$\frac{\phi(\lambda)}{1-\phi(\lambda)}=\frac{c}{\lambda}+\phi_0(\lambda),$$
where  $\phi_0$ is a linear combination of functions of the form $\lambda\mapsto(\lambda-\xi)^{-k}$ for $\xi\in\CC_-$, $k\in\NN$. On the open right half-plane the latter function agrees with the Laplace transform of the bounded function $t\mapsto \smash{\frac{1}{(k-1)!}t^{k-1}e^{\xi t}}$, $t\ge0$. It follows for instance from Widder's Theorem \cite{Wid36} that 
$$\sup_{n\in\NN}\sup_{\lambda>0}\frac{\lambda^{n+1}}{n!}\bigg|\frac{\dd^n}{\dd\lambda^n}\frac{\phi(\lambda)}{1-\phi(\lambda)}\bigg|\le|c|+\sup_{n\in\NN}\sup_{\lambda>0}\frac{\lambda^{n+1}}{n!}\bigg|\frac{\dd^n\phi_0(\lambda)}{\dd\lambda^n}\bigg|<\infty.$$
Hence \eqref{eq:sum_bound} holds and the proof is complete.
\end{proof}

\begin{rem}
As pointed out by a referee, 
in the last part of the proof of Theorem 2.7
 property~\eqref{eq:mon_bd} can alternatively be verified using a minimal realisation for the rational transfer function $\lambda\mapsto \phi(\lambda)(1-\phi(\lambda))^{-1}$.
\end{rem}

\begin{ex}\phantomsection\label{ex:comp_mon}
\begin{enumerate}[(a)]
\item If $\phi(\lambda)=\zeta(\lambda+\zeta)^{-1}$ for some $\zeta>0$ then $\phi$ is completely monotonic. Since products of completely monotonic functions are completely monotonic also functions of the form
\begin{equation}\label{eq:phi_prod}\phi(\lambda)=\frac{\zeta_1\cdots\zeta_m}{(\lambda+\zeta_1)\cdots(\lambda+\zeta_m)}
\end{equation}
for some $\zeta_1,\dots,\zeta_m>0$ are completely monotonic.
\item Consider the function
\begin{equation}\label{eq:phi_pair}\phi(\lambda)=\frac{(a^2+b^2)c}{(\lambda+c)(\lambda^2+2a\lambda+a^2+b^2)},
\end{equation}
where $a,b,c>0$. Then $\phi$ has poles at $\lambda=-c$ and $\lambda=-a\pm ib$. Moreover, $\phi$ is the Laplace transform of the function $g$ defined by
$$g(t)=\frac{(a^2+b^2)c}{(a-c)^2+b^2}\left(e^{-ct}+\left(\frac{c-a}{b}\sin(bt)-\cos(bt)\right)e^{-at}\right), $$
and a simple argument shows that $g(t)\ge0$ for all $t\ge0$ if and only if $a\ge c$. Hence, by Bernstein's theorem, $\phi$ is completely monotonic if and only if $a\ge c$.
\item By combining the examples given in (a) and (b) we obtain complete monotonicity for many other rational functions whose sets of poles consist of negative real numbers and complex conjugate pairs with negative real parts.
\end{enumerate}
\end{ex}

\subsection{Asymptotic behaviour}\label{sec:asymp}

We now study the asymptotic behaviour as $t\to\infty$ of the solutions $x(t)=T(t)x_0$, $t\ge0$, of the abstract Cauchy problem \eqref{eq:ACP}. The key ingredient needed in the proof of Theorem~\ref{thm:asymp} is the following asymptotic result, which combines  special cases of \cite[Theorem~8.4]{BatChi16} and \cite[Proposition~3.1]{Mar11}; see also \cite{RSS19}.

\begin{thm}\label{thm:mlog} Let $X$ be a complex Banach space and let $\T$ be a bounded $C_0$-semigroup on $X$ whose generator $A$ is bounded and satisfies $\sigma(A)\cap i\RR=\{0\}$. Suppose moreover that 
\begin{equation}\label{eq:res_bd}
\|R(is,A)\|=
O(|s|^{-\alpha}), \quad |s|\to0,
\end{equation}
for some $\alpha\ge1$.
Then 
\begin{equation}\label{eq:Mart}
\|AT(t)\|=O\left(\frac{\log(t)^{1/\alpha}}{t^{1/\alpha}}\right),\quad t\to\infty,
\end{equation}
and if $X$ is a Hilbert space then 
$$\|AT(t)\|=O(t^{-1/\alpha}),\quad t\to\infty.$$
\end{thm}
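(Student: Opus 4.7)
Since the statement is advertised as a combination of special cases of two cited results, my plan is to verify the required hypotheses for those theorems in the present setting and then invoke them.

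First I would establish a global resolvent bound on $i\RR\setminus\{0\}$. Boundedness of $A$ makes $\sigma(A)$ compact, and the Neumann series yields $\|R(is,A)\|\le(|s|-\|A\|)^{-1}$ for $|s|>\|A\|$. Continuity of the resolvent on $\rho(A)\supseteq i\RR\setminus\{0\}$ (which uses the hypothesis $\sigma(A)\cap i\RR=\{0\}$) then forces $\|R(is,A)\|$ to be uniformly bounded on any compact subset of $i\RR\setminus\{0\}$, and combined with the prescribed growth $\|R(is,A)\|=O(|s|^{-\alpha})$ near $0$ this yields a single estimate of the shape $\|R(is,A)\|\le C(1+|s|^{-\alpha})$ for all $s\ne0$. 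This is precisely the form of imaginary-axis resolvent control that the quantified Tauberian machinery of \cite{BatChi16} and \cite{Mar11} demands.

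With the bound in place I would apply \cite[Theorem~8.4]{BatChi16} to the bounded semigroup $\T$. In the special case where the only imaginary spectral point is $0$ and the resolvent growth there is of polynomial type $M(s)=s^{-\alpha}$, inverting the associated auxiliary function $\Mlog(s)\asymp s^{-\alpha}\log(1+s^{-1})$ produces exactly the rate $\|AT(t)\|=O((\log t)^{1/\alpha}t^{-1/\alpha})$ claimed in \eqref{eq:Mart}. The Hilbert space refinement follows the same scheme but uses \cite[Proposition~3.1]{Mar11}, whose Plancherel-based argument eliminates the logarithmic loss and yields the sharper $O(t^{-1/\alpha})$ bound.

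The main subtlety is that both cited theorems are most commonly phrased for unbounded generators with $\sigma(A)\cap i\RR=\emptyset$ and polynomial resolvent growth as $|s|\to\infty$, producing decay of $\|T(t)A^{-1}\|$. In our context $A$ is bounded, the imaginary spectrum is the single point $\{0\}$, the resolvent growth occurs at $s=0$, and the dual decay quantity is $\|AT(t)\|$. The holomorphic functional calculus makes this dualisation essentially formal — the contour of integration skirts $0$ rather than closing at infinity, with $\lambda$ playing the role of $1/\lambda$ — but care is needed to confirm that the growth exponent $\alpha$ carries over faithfully and that the asymptotic rates emerge unchanged. This translation, rather than any genuinely new estimate, is the only non-routine step.
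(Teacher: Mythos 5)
Your proposal follows the same route as the paper, which gives no proof beyond citing \cite[Theorem~8.4]{BatChi16} and \cite[Proposition~3.1]{Mar11}. The preliminary step of upgrading the local estimate to a global bound $\|R(is,A)\|\le C(1+|s|^{-\alpha})$ on $i\RR\setminus\{0\}$, using boundedness of $A$ and compactness of its spectrum, is the right bookkeeping, and the two citations together with the $\Mlog$ inversion deliver exactly \eqref{eq:Mart} and its Hilbert-space sharpening. The one caveat is that the ``dualisation'' you flag as the only non-routine step is not actually something you need to perform: \cite[Theorem~8.4]{BatChi16} sits in Section~8 of that paper (titled ``Singularity at zero'') and is already stated for a bounded $C_0$-semigroup with $\sigma(A)\cap i\RR\subseteq\{0\}$, resolvent growth as $s\to0$, and $\|AT(t)\|$ as the decaying quantity; \cite[Proposition~3.1]{Mar11} is likewise set in the singular-at-zero regime. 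The translation from the more familiar singularity-at-infinity results (morally via $A\mapsto A^{-1}$) has already been carried out internally by those authors, so the cited theorems apply verbatim to the present hypotheses once the global resolvent bound is in place.
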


Returning now to our particular case, define 
$$Y=\Big\{x_0\in X:\lim_{t\to\infty}x(t)\mbox{ exists}\Big\}.$$
Furthermore, let $S\in\B(X)$ denote the right-shift operator on $X$, defined by $Sx=(0,x_1,x_2,\dots)$, and let $M\in\B(X)$ be defined by $Mx=(A_1A_0^{-1}x_k)$ for all $x=(x_k)_{k\ge1}\in X$.

\begin{thm}\label{thm:asymp}
Let $m\in\NN$ and $1\le p\le\infty$. Suppose that $\sigma(A_0)\subseteq\CC_-$, that $0\in\Omega_\phi^+\subseteq\CC_-\cup\{0\}$ and that $\phi$ is completely monotonic.  Then
\begin{equation}\label{eq:Y}
Y=\left\{x_0\in X:\bigg\|\frac1n\sum_{k=1}^n\phi(0)^kS^kMx_0\bigg\|\to0\mbox{ as }n\to\infty\right\},
\end{equation}
and $Y=X$ if and only if $1<p<\infty$. Moreover, $\|x(t)\|\to0$ as $t\to\infty$ whenever $x_0\in Y$. 
If $x_0\in Y$ is such that 
\begin{equation}\label{eq:range}
\bigg\|\frac1n\sum_{k=1}^n\phi(0)^kS^kMx_0\bigg\|=O(n^{-1})\mbox{ as }n\to\infty,
\end{equation}
then 
\begin{equation}\label{eq:rate}
\|x(t)\|=O\Bigg(\frac{\log (t)^{|1/2-1/p|}}{t^{1/2}}\Bigg),\quad t\to\infty.
\end{equation}
Finally, for $1\le p\le\infty$ and all $x_0\in X$ we have 
\begin{equation}\label{eq:rate2}
\|\dot{x}(t)\|=O\Bigg(\frac{\log (t)^{|1/2-1/p|}}{t^{1/2}}\Bigg),\quad t\to\infty.
\end{equation}
\end{thm}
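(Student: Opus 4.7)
My plan is to combine the boundedness of $\T$ from Theorem~\ref{thm:bdd}, the resolvent estimate $\|R(is,A)\|=O(|s|^{-2})$ as $|s|\to 0$ coming from Proposition~\ref{prp:res} together with Lemmas~\ref{lem:res} and~\ref{lem:res2} (complete monotonicity forces $n_\phi=2$), and the asymptotic Theorem~\ref{thm:mlog}; note also that $\sigma(A)\cap i\RR=\{0\}$ under the stated hypotheses by Proposition~\ref{prp:spec}, and that $A$ is a bounded operator so Theorem~\ref{thm:mlog} applies.

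First I would establish the rate~\eqref{eq:rate2}. Applying Theorem~\ref{thm:mlog} with $\alpha=2$ yields $\|AT(t)\|=O((\log t)^{1/2}t^{-1/2})$ for general $p$, and the sharper bound $\|AT(t)\|=O(t^{-1/2})$ in the Hilbert case $p=2$. Since $A$, and hence $T(t)$, is defined by the same coordinate formulas on $\ell^p(\NN,\CC^m)$ for every $p\in[1,\infty]$, the family $\{AT(t)\}$ is consistent across $p$, and Riesz--Thorin interpolation between $\ell^2$ and $\ell^1$ (respectively $\ell^\infty$) gives $\|AT(t)\|_{p\to p}=O((\log t)^{|1/2-1/p|}t^{-1/2})$, proving~\eqref{eq:rate2} by applying the operator bound to any fixed $x_0$.

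Next I would identify $Y$. Any limit of $T(t)x_0$ is fixed by $\T$ and hence in $\Ker A$, which is trivial by $\sigma_p(A)\subseteq\sigma(A_0)\subseteq\CC_-$, so $Y=\{x_0:T(t)x_0\to 0\}$. The inclusion $\overline{\Ran A}\subseteq Y$ follows from $T(t)Ay=AT(t)y$ and the bound on $\|AT(t)\|$ just established, extended by boundedness of $\T$ and a density argument; the reverse inclusion $Y\subseteq\overline{\Ran A}$ comes from pairing $x_0\in Y$ with $y\in\Ker A^*$ via $\langle x_0,y\rangle=\langle T(t)x_0,y\rangle\to 0$, which identifies $Y\subseteq{}^\perp(\Ker A^*)=\overline{\Ran A}$. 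To rewrite $\overline{\Ran A}$ in the explicit form~\eqref{eq:Y}, I would rearrange the resolvent formula~\eqref{eq:res} as $R(\lambda,A)x_0=R_\lambda x_0+R_\lambda A_1R_\lambda\,S(I-\phi(\lambda)S)^{-1}x_0$, valid for $|\phi(\lambda)|<1$, with $S$ the right-shift and the geometric series converging for $\lambda>0$ small since $\phi(\lambda)\to\phi(0)=1$ monotonically. Using $1-\phi(\lambda)\sim -\phi'(0)\lambda$ as $\lambda\to 0^+$ and the fact that $M=A_1A_0^{-1}$, applied componentwise, commutes with $S$, the main term of $\lambda R(\lambda,A)x_0$ is up to a fixed componentwise matrix the Abel sum $(1-\phi(\lambda))\sum_{k=1}^\infty\phi(\lambda)^{k-1}S^kMx_0$. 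A classical Abelian--Tauberian correspondence matches norm-vanishing of this Abel sum with the Ces\`aro condition in~\eqref{eq:Y}, giving~\eqref{eq:Y} once coupled with the characterisation of $\overline{\Ran A}$ for the bounded semigroup $\T$ as precisely the set on which the mean ergodic averages vanish. The equivalence of $Y=X$ with $1<p<\infty$ then reads off from the density statement at $\lambda=0$ in Proposition~\ref{prp:spec}: $\Ran A=\Ran(0-A)$ is dense exactly in that range of $p$.

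Finally, for~\eqref{eq:rate} under assumption~\eqref{eq:range}: norm-boundedness of the partial sums $\sum_{k=1}^n S^kMx_0$ can be turned, via a telescoping argument exploiting $\phi(0)=1$ (equivalently $MA_1=-A_1$), into an explicit pre-image $y\in X$ with $Ay=x_0$ and $\|y\|$ controlled by the constant in~\eqref{eq:range}. Then $\|x(t)\|=\|AT(t)y\|\le\|AT(t)\|_{p\to p}\|y\|$, combined with the operator bound from the second step, gives~\eqref{eq:rate}. The main obstacle, I expect, is the Abelian--Tauberian step passing rigorously from the Ces\`aro condition in~\eqref{eq:Y} to membership of $\overline{\Ran A}$ at the norm level, together with the explicit construction of a norm-controlled pre-image $y$ under~\eqref{eq:range}.
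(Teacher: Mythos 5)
Your proposal follows essentially the same route as the paper's own (terse) proof: you obtain boundedness from Theorem~\ref{thm:bdd}, combine Proposition~\ref{prp:res} with Lemma~\ref{lem:res2} and Theorem~\ref{thm:mlog} to get the $\|AT(t)\|$ rate with $\alpha=2$ (and the sharper Hilbert-space rate when $p=2$), then use Riesz--Thorin on the $p$-consistent family to get the exponent $|1/2-1/p|$; you identify $Y$ with $\overline{\Ran A}$ via $\Ker A=\{0\}$ and the annihilator argument, characterise $\overline{\Ran A}$ by the Ces\`aro condition in~\eqref{eq:Y}, read off $Y=X\iff 1<p<\infty$ from Proposition~\ref{prp:spec} at $\lambda=0$, and show~\eqref{eq:range} produces a norm-controlled preimage so that~\eqref{eq:rate} follows from the operator bound. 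This is precisely the outline the paper gives (which in turn defers to \cite[Section~4]{PauSei17a}). The one place where your phrasing diverges slightly is the derivation of~\eqref{eq:Y}: the paper invokes the (discrete) mean ergodic theorem for the power-bounded shift $S$ to identify $\overline{\Ran A}$ with the Ces\`aro null set of $S$ applied to $Mx_0$, whereas you propose an Abel-sum / resolvent argument and an Abelian--Tauberian comparison. These are two faces of the same classical ergodic-theoretic fact, and you correctly flag that making the Tauberian direction rigorous at the norm level is the delicate point; going directly through the mean ergodic theorem for $S$, as the reference does, sidesteps that issue.
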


\begin{proof}
We merely outline the main steps of the proof since it follows the same argument as given in \cite[Section~4]{PauSei17a}, with the slight simplification that now $\Ker A=\{0\}$ for $1\le p\le\infty$. Note first that boundedness of the semigroup $\T$ follows from Theorem~\ref{thm:bdd}.
The key idea of the proof is to establish first that $Y$ coincides with the closure of $\Ran A$, which can then be characterised as in \eqref{eq:Y} using results from classical ergodic theory. The next step is to show that the condition in \eqref{eq:range} characterises elements which lie not just in the closure of $\Ran A$ but in $\Ran A$ itself. We then combine Proposition~\ref{prp:res}, Lemma~\ref{lem:res2} and  Theorem~\ref{thm:mlog} to obtain rates of convergence for $\|AT(t)\|$ as $t\to\infty$. The rates in \eqref{eq:rate} and \eqref{eq:rate2} then follow by applying the Riesz-Thorin interpolation theorem to improve the exponent of the logarithm when $p\ne2$.
\end{proof}

\begin{rem}\label{rem:opt}
The decay rate obtained for $\|AT(t)\|$ as $t\to\infty$ in the above proof, and hence the rates in \eqref{eq:rate} and \eqref{eq:rate2}, are sharp except perhaps for the logarithmic factor. Indeed, since $\|R(is,A)\|\asymp|s|^{-2}$ as $|s|\to0$ it follows from \cite[Corollary~6.11]{BatChi16} that $\|AT(t)\|\ge ct^{-1/2}$, $t\ge1$, for some $c>0$; see also \cite[Remark~4.11]{PauSei17a}.
\end{rem}

\subsection{Improved rates for totally monotonic functions}
It is known that the rates in \eqref{eq:rate} and \eqref{eq:rate2} are sharp if $p=2$, and it was conjectured in \cite[Remark~4.14(b)]{PauSei17a} that the logarithmic factors in these estimates can in fact be dropped for all values of $p$. We now show that this conjecture to be correct provided we replace the monotonicity assumption on $\phi$ by a slightly stronger assumption. To this end we introduce the concept of \emph{total monotonicity}. We say that a rational function $\phi$ is \emph{totally monotonic} if there exist $\eps>0$ and a non-negative sequence $(a_n)_{n\ge0}\in\ell^1(\ZZ_+
)$ such that the function $\psi_\eps$ defined by $\psi_\eps(\lambda)=\phi(\eps^{-1}(\lambda-1))$ satisfies
$$\psi_\eps(\lambda)=\sum_{n=0}^\infty\frac{a_n}{\lambda^{n+1}},\quad |\lambda|\ge1.$$
In particular, any pole $\lambda$ of $\phi$ satisfies $|\eps\lambda+1|<1$.  The function $\psi_\eps$ may be  viewed as a unilateral $Z$-transform of the sequence $(a_n)_{n\ge0}$, which is a discrete analogue of the Laplace transform. Suppose that $\phi$ is a rational function which decays to zero at infinity, has poles $\xi_1,\dots,\xi_N\in\CC_-$ and admits the partial fraction decomposition
$$\phi(\lambda)=\sum_{j=1}^N\sum_{k=1}^{n_j}\frac{A_{j,k}}{(\lambda-\xi_j)^k},\quad \lambda\in\rho(A_0),$$
for some $n_1,\dots,n_N\in\NN$ and $A_{j,k}\in\CC$. Then for $\eps>0$ sufficiently small to ensure that $|\eps\xi_j+1|<1$ for $1\le j\le N$ we have
$$\psi_\eps(\lambda)=\sum_{j=1}^N\sum_{k=1}^{n_j}\left(\frac{\eps}{\lambda}\right)^k\frac{A_{j,k}}{(1-\lambda^{-1}(\eps\xi_j+1))^k}=\sum_{n=0}^\infty\frac{a_{\eps,n}}{\lambda^{n+1}},\quad |\lambda|\ge1,$$
where
$$a_{\eps,n}=\sum_{j=1}^N\sum_{k=1}^{n_j}A_{j,k}\binom{n}{k-1}\eps^k(\eps\xi_j+1)^{n-k+1},\quad n\ge0.$$
The sequence $(a_{\eps,n})_{n\ge0}$ is summable, and hence the function $\phi$ is totally monotonic if and only if there exists a sufficiently small $\eps>0$ such that $a_{\eps,n}\ge0$ for all $n\ge0$.

For a given number $\eps>0$ let us say that a function $\phi$ is $\eps$-\emph{totally monotonic} if it is totally monotonic and we can choose the given value of $\eps$ in the definition of total monotonicity. Observe that if a function $\phi$ is $\eps$-totally monotonic for some $\eps>0$, then $\phi$ is also $\eps_0$-totally monotonic for all $\eps_0\in(0,\eps)$. Indeed, suppose that $\phi$ is $\eps$-totally monotonic and let $(a_n)_{n\ge0}$ be as in the definition. Given $\eps_0\in(0,\eps)$ let $\beta=\eps_0/\eps$. Then for $|\lambda|\ge1$ we have $|\beta^{-1}(\lambda-1+\beta)|\ge1$ and hence
$$\psi_{\eps_0}(\lambda)=\psi_\eps\left(\frac{\lambda-1+\beta}{\beta}\right)=\sum_{n=0}^\infty\frac{a_n\beta^{n+1}}{(\lambda-1+\beta)^{n+1}}=\sum_{n=0}^\infty\frac{b_n}{\lambda^{n+1}},\quad |\lambda|\ge1,$$
where 
$$b_n=\sum_{k=0}^na_k\binom{n}{k}\beta^{k+1}(1-\beta)^{n-k},\quad n\ge0.$$
In particular, the sequence $(b_n)_{n\ge0}$ is non-negative and summable with $\sum_{n=0}^\infty b_n=\sum_{n=0}^\infty a_n$, so $\phi$ is $\eps_0$-totally monotonic.
Furthermore, the class of totally monotonic functions is closed under taking sums, products and under multiplication by non-negative scalars. Any totally monotonic function is completely monotonic. Indeed, if $\phi$ is totally monotonic we may consider the function $g\colon(0,\infty)\to\CC$ defined by
\begin{equation*}\label{eq:g}
g(t)=\frac{1}{\eps}e^{-t/\eps}\sum_{n=0}^\infty\frac{a_n}{n!}\left(\frac t\eps\right)^n,\quad t>0,
\end{equation*}
where $\eps>0$ and $(a_n)_{n\ge0}$ are as in the definition of total monotonicity. Then $g$ is non-negative and integrable with $\int_0^\infty g(t)\,\dd t=\sum_{n=0}^\infty a_n$, and moreover
$$\phi(\lambda)=\int_0^\infty e^{-\lambda t}g(t)\,\dd t,\quad \lambda\ge0,$$
so by Bernstein's theorem $\phi$ is completely monotonic. Alternatively, one may simply differentiate term-by-term the expression for $\phi(\lambda)=\psi_\eps(1+\eps\lambda)$ when $\lambda>0$ to verify directly that $\phi$ is completely monotonic.

\begin{ex}\label{ex:tot_mon}
\begin{enumerate}[(a)]
\item If $\zeta>0$ then the function $\phi$ defined by $\phi(\lambda)=\zeta(\lambda+\zeta)^{-1}$ is totally monotonic. Indeed, for $\eps\in(0,\zeta^{-1})$ we have
$$\psi_\eps(\lambda)=\frac{\eps\zeta}{\lambda-1+\eps\zeta}=\sum_{n=0}^\infty\frac{\eps\zeta(1-\eps\zeta)^n}{\lambda^{n+1}},\quad |\lambda|\ge1,$$ 
so we may set $a_n=\eps\zeta(1-\eps\zeta)^n$, $n\ge0$. It follows that functions $\phi$ of the form considered in \eqref{eq:phi_prod} are also totally monotonic.
\item The function $\phi$ considered in \eqref{eq:phi_pair} is totally monotonic if and only if $a>c$. In particular, for $a=c$ the function is completely monotonic but not totally monotonic.
\item As in Example~\ref{ex:comp_mon}(c)  we may combine the examples in (a) and (b) to obtain total monotonicity for a larger class of rational functions $\phi$.
\end{enumerate}
\end{ex}

We now sharpen Theorem~\ref{thm:asymp} by showing that the logarithmic factors in \eqref{eq:rate} and \eqref{eq:rate2} and are not needed when the characteristic function is totally monotonic. We moreover assume that the characteristic function is of the form $\phi(\lambda)=p_0(0)/p_0(\lambda)$, where $p_0$ is the characteristic polynomial of the matrix $A_0$. In view of the fact that $\phi$ is of this form in all of the examples we have considered, and indeed many examples which arise naturally in applications, there is no serious loss of generality here. The improved rate is achieved by appealing to a result of Dungey \cite[Theorem~1.2]{Dun08}, which has previously been used in  \cite{PauSei18} for a similar purpose.

\begin{thm}\label{thm:no_log}
Let $m\in\NN$ and $1\le p\le\infty$. Suppose that $\sigma(A_0)\subseteq\CC_-$ and that $0\in\Omega_\phi^+\subseteq\CC_-\cup\{0\}$. Assume further that $\phi$ is totally monotonic and that $\phi(\lambda)=p_0(0)/p_0(\lambda)$ for all $\lambda\in\rho(A_0)$, where $p_0$ is the characteristic polynomial of $A_0$.  If $x_0\in X$ is such that 
\eqref{eq:range} holds 
then $\|x(t)\|=O(t^{-1/2})$  as $t\to\infty$. Moreover, for $1\le p\le\infty$ and all $x_0\in X$ we have $\|\dot{x}(t)\|=O(t^{-1/2})$  as $t\to\infty.$ Both of these rates are sharp.
\end{thm}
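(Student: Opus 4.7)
The plan is to strengthen the main ingredient in the proof of Theorem~\ref{thm:asymp}, namely the decay rate of $\|AT(t)\|$ as $t\to\infty$. In that argument the rate $O(t^{-1/2})$ was obtained on the Hilbert space $\ell^2(\NN,\CC^m)$ from the Hilbert-space conclusion of Theorem~\ref{thm:mlog} (with $\alpha=2$, justified by Lemma~\ref{lem:res2}), while on the other $\ell^p$ spaces the Banach-space conclusion only yielded $\|AT(t)\|=O(t^{-1/2}\log(t)^{1/2})$, and the logarithm in \eqref{eq:rate} and \eqref{eq:rate2} then entered through a Riesz-Thorin interpolation. It therefore suffices to prove $\|AT(t)\|=O(t^{-1/2})$ \emph{without} a logarithmic factor on every $\ell^p$, $1\le p\le\infty$; the two conclusions of Theorem~\ref{thm:no_log} then follow from the same reductions used in the proof of Theorem~\ref{thm:asymp}, with the logarithmic factor simply deleted throughout.

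To eliminate the log factor I would replace Theorem~\ref{thm:mlog} by \cite[Theorem~1.2]{Dun08}, which is the tool already exploited for a related purpose in \cite{PauSei18}. Dungey's result yields $O(t^{-1/2})$ decay of $\|AT(t)\|$ on a general Banach space, but at the cost of a stronger structural hypothesis on $A$, of \emph{totally monotonic} rather than merely \emph{completely monotonic} flavour. Under the assumptions of Theorem~\ref{thm:no_log} this hypothesis should be verifiable. Total monotonicity of $\phi$ gives, for sufficiently small $\eps>0$, a representation
\[
\phi(\mu)=\sum_{n=0}^\infty a_{\eps,n}(1+\eps\mu)^{-(n+1)},\qquad \R\mu\ge 0,
\]
with non-negative summable coefficients $a_{\eps,n}$; this is the scalar-level subordination structure that Dungey's theorem demands at the operator level. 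The assumption $\phi=p_0(0)/p_0$ is what allows one to transfer this scalar positivity to $A$: it forces a particularly tight algebraic interaction between $A_0$ and $A_1$ and, combined with the resolvent formula \eqref{eq:res}, should allow a clean expression of powers of the discrete resolvent $(I+\eps A)^{-1}$ on $X$ in terms of the non-negative weights $a_{\eps,n}$ and the shift $S$. Once this identification is made, Dungey's theorem applies and delivers $\|AT(t)\|=O(t^{-1/2})$ uniformly in $p$.

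Sharpness of both asymptotic bounds is already essentially contained in Remark~\ref{rem:opt}: the resolvent asymptotic $\|R(is,A)\|\asymp|s|^{-2}$ near $s=0$, supplied by Proposition~\ref{prp:res} and Lemma~\ref{lem:res2}, together with \cite[Corollary~6.11]{BatChi16}, give $\|AT(t)\|\ge ct^{-1/2}$ for some $c>0$ and all large $t$. This immediately furnishes a matching lower bound for $\|\dot x(t)\|$ at some $x_0\in X$, and the same bound transfers to $\|x(t)\|$ for a suitable $x_0\in\Ran A\subseteq Y$ satisfying \eqref{eq:range}. The main obstacle in the proof is the operator-level step in the middle paragraph: one must verify that the combination of total monotonicity of $\phi$ with the polynomial identity $\phi=p_0(0)/p_0$ is \emph{genuinely} strong enough to place $A$ inside the class of operators covered by Dungey's theorem, rather than producing positivity only for scalar quantities while failing to capture the non-commutative structure of $A_0$ and $A_1$ acting on $\CC^m$.
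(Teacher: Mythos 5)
Your overall strategy matches the paper's: reduce to $\|AT(t)\|=O(t^{-1/2})$ on every $\ell^p$, appeal to \cite[Theorem~1.2]{Dun08}, and read off optimality from Remark~\ref{rem:opt} via the lower bound $\|R(is,A)\|\asymp|s|^{-2}$ and \cite[Corollary~6.11]{BatChi16}. You also correctly spot that the hypothesis $\phi=p_0(0)/p_0$ is the lever that transfers the scalar positivity coming from total monotonicity to the non-commutative interaction of $A_0$ and $A_1$, and the reduction to Theorem~\ref{thm:asymp} with the logarithm deleted is exactly how the final statements follow. However, the heart of the proof is verifying Dungey's hypothesis, and this is precisely the step you flag as ``the main obstacle'' and leave unargued. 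That makes this a genuine gap rather than an alternative route.

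Two corrections on the missing step. First, the hypothesis one needs from \cite[Theorem~1.2]{Dun08} (condition (ii)) is power-boundedness of $B=I+\eps A$ itself for sufficiently small $\eps>0$ with $|\eps\lambda+1|<1$ on $\sigma(A_0)$, not control of powers of the discrete resolvent $(I+\eps A)^{-1}$ as you suggest; the implication (ii)$\implies$(v) with $n=1$ then gives $\|AT_0(t)\|=O(t^{-1/2})$ for the semigroup generated by $\eps_0A$, $\eps_0\in(0,\eps)$. Second, Dungey does not require a subordination representation; what is needed is the bound $\sup_{n\ge1}\|B^n\|<\infty$, and establishing it is nontrivial. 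One writes $B^n$ as a contour integral $\frac{1}{2\pi i}\oint_\Gamma\lambda^nR(\lambda,B)\,\dd\lambda$, substitutes the one-sided resolvent formula \eqref{eq:res} with $\psi_\eps$ in place of $\phi$, and uses $\phi=p_0(0)/p_0$ to express $R_\lambda B_1R_\lambda$ as $p_0(\eps^{-1}(\lambda-1))^{-2}$ times a fixed matrix polynomial of degree $2m-2$. The resulting estimate for $\|B^n\|$ reduces to bounding $\sum_{\ell\ge0}\bigl|\oint_\Gamma\lambda^{n+j}\psi_\eps(\lambda)^{\ell+2}\,\dd\lambda\bigr|$ uniformly in $n$; this is where the non-negativity of $(a_n)$ enters, via the residue calculus and an induction giving $\sum_{\ell=0}^{n-1}a^{(\ell+2)}_{n-1-\ell}\le1$, where $(a^{(\ell)}_n)_{n\ge0}$ are the coefficients of $\psi_\eps^\ell$. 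Without supplying this argument (or an equivalent), your proposal identifies the right tools but does not prove the theorem.
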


\begin{proof}
By Theorem~\ref{thm:asymp} and the preceding discussion it suffices to prove that $\|AT(t)\|=O(t^{-1/2})$ as $t\to\infty$, noting that optimality follows from Remark~\ref{rem:opt}. Let $\eps>0$ and $(a_n)_{n\ge0}$ be as in the definition of total monotonicity, and let $\eps_0\in(0,\eps)$. We may assume that $\eps$ is sufficiently small to ensure that $|\eps\lambda+1|<1$ for all $\lambda\in\sigma(A_0)$. We shall show that the operator $B=\eps A+I$ is power-bounded. It then follows from the implication (ii)$\implies$(v) of \cite[Theorem~1.2]{Dun08} (with $n=1$) that the semigroup $(T_0(t))_{t\ge0}$ generated by $\eps_0A$ satisfies $\|AT_0(t)\|=O(t^{-1/2})$ as $t\to\infty$, from which the result follows immediately. Note that $Bx=(B_0x_1,B_0x_2+B_1x_1,B_0x_3+B_1x_2,\dotsc)$ for $x=(x_k)_{k\ge1}\in X,$ where $B_0=\eps A_0+I$ and $B_1=\eps A_1$. For $|\eps\lambda+1|>1$ we have
$$|\phi(\lambda)|=|\psi_\eps(\eps\lambda+1)|=\bigg|\sum_{n=0}^\infty\frac{a_n}{(\eps\lambda+1)^{n+1}}\bigg|<\sum_{n=0}^\infty a_n=\phi(0)=1.$$
Since $|\eps\lambda+1|<1$ for all $\lambda\in\sigma(A_0)$, it follows from Proposition~\ref{prp:spec} and the spectral mapping theorem that $\sigma(B)=\eps\sigma(A)+1\subseteq\{\lambda\in\CC:|\lambda|\le1\}$. A straightforward calculation shows that 
$$B_1R(\lambda,B_0)B_1=\psi_\eps(\lambda)B_1$$
for all $\lambda\in\rho(B_0)$. Let us write $R_\lambda$ for $R(\lambda,B_0)$ when $\lambda\in\rho(B_0)$. Then from \eqref{eq:res} we have
$$R(\lambda,B)x=\left(R_\lambda x_k+R_\lambda B_1R_\lambda\sum_{\ell=0}^{k-2}\psi_\eps(\lambda)^\ell x_{k-\ell-1}\right)_{k\ge1}$$
for all $x\in X$ and $|\lambda|>1$. Let $\Gamma$ be a circle centred on the origin of radius greater than 1, oriented counterclockwise. Then for $x\in X$ and $n\ge1$ we have
$$B^n x=\frac{1}{2\pi i}\oint_\Gamma \lambda^n R(\lambda,B)x\,\dd\lambda$$
by the Dunford-Schwartz functional calculus for bounded linear operators, and hence
\begin{equation}\label{eq:Bn}
B^nx=(B_0^nx_k)_{k\ge1}+\left(\frac{1}{2\pi i}\sum_{\ell=0}^{k-2}\oint_\Gamma \lambda^n\psi_\eps(\lambda)^\ell R_\lambda B_1R_\lambda x_{k-\ell-1}\,\dd\lambda \right)_{k\ge1}.
\end{equation}
By our assumption that $\phi(\lambda)=p_0(0)/p_0(\lambda)$ for $\lambda\in\rho(A_0)$,  where $p_0(\lambda)=\det(\lambda-A_0)$, we may find $m\times m$ matrices $C_0,\dots,C_{2m-2}$ such that
$$R_\lambda B_1 R_\lambda=\frac{1}{p_0(\eps^{-1}(\lambda-1))^2}\sum_{j=0}^{2m-2}C_j\lambda^j,\quad \lambda\in\rho(B_0).$$
It follows using \eqref{eq:Bn} that
\begin{equation}\label{eq:norm_Bn}
\|B^n\|\le\|B_0^n\|+\frac{1}{2\pi}\sum_{j=0}^{2m-2}\frac{\|C_j\|}{|p_0(0)|^2}\sum_{\ell=0}^\infty\left|\oint_\Gamma\lambda^{n+j}\psi_\eps(\lambda)^{\ell+2}\,\dd\lambda\right|,\quad n\ge1.
\end{equation}
For each $\ell\ge0$ we may find a non-negative summable sequence $(a_n^{(\ell)})_{n\ge0}$ such that
$$\psi_\eps(\lambda)^\ell=\sum_{n=0}^\infty\frac{a_n^{(\ell)}}{\lambda^{n+\ell}},\quad |\lambda|\ge1.$$
Note that $a_n^{(\ell+1)}=\sum_{k=0}^na_ka_{n-k}^{(\ell)}$ for all $n,\ell\ge0$ and that  $\sum_{n=0}^\infty a_n^{(\ell)}=\psi_\eps(1)^\ell=\phi(0)^\ell=1$ for all $\ell\ge0$. It follows from Cauchy's residue theorem that
\begin{equation}\label{eq:Cauchy}
\frac{1}{2\pi}\sum_{\ell=0}^\infty\left|\oint_\Gamma\lambda^{n}\psi_\eps(\lambda)^{\ell+2}\,\dd\lambda\right|=\sum_{\ell=0}^{n-1}a^{(\ell+2)}_{n-1-\ell},\quad n\ge1.
\end{equation}
We now prove by induction that 
\begin{equation}\label{eq:ind}
\sum_{\ell=0}^{n-1}a^{(\ell+2)}_{n-1-\ell}\le \sum_{\ell=0}^{n-1}a^{(2)}_{n-1-\ell}\le1,\quad n\ge1.
\end{equation}
Note that the second inequality is straightforward, so we focus on the first. The claim holds trivially for $n=1$. Suppose it holds for positive integers less than or equal to some $n$. Then 
$$\begin{aligned}
\sum_{\ell=0}^{n}a^{(\ell+2)}_{n-\ell}&=a_n^{(2)}+\sum_{\ell=0}^{n-1}a^{(\ell+3)}_{n-1-\ell}=a_n^{(2)}+\sum_{\ell=0}^{n-1}\sum_{k=0}^{n-1-\ell}a_ka_{n-1-\ell-k}^{(\ell+2)}\\&=a_n^{(2)}+\sum_{k=0}^{n-1}a_k\sum_{\ell=0}^{n-1-k}a_{n-1-\ell-k}^{(\ell+2)}\le a_n^{(2)}+\sum_{k=0}^{n-1}a_k\sum_{\ell=0}^{n-1-k}a_{n-1-\ell-k}^{(2)}\\&\le a_n^{(2)}+\sum_{k=0}^{n-1}a_k\sum_{\ell=0}^{n-1}a_{n-1-\ell}^{(2)}\le a_n^{(2)}+\sum_{\ell=0}^{n-1}a_{n-1-\ell}^{(2)}=\sum_{\ell=0}^{n}a_{n-\ell}^{(2)},
\end{aligned}$$
where the first inequality follows from the inductive hypothesis, the second from the fact that the sequence $\smash{(a_n^{(2)})_{n\ge0}}$ is non-negative and the third from the fact that $\sum_{n=0}^\infty a_n=1$. Thus \eqref{eq:ind} holds. Using this in \eqref{eq:Cauchy} shows that the second term on the right-hand side of \eqref{eq:norm_Bn} is uniformly bounded for $n\ge1$. Since $\sup_{n\ge1}\|B_0^n\|<\infty$ as a consequence of the spectral radius formula, we deduce that $B$ is power-bounded, so the proof is complete.
\end{proof}

\subsection{The two-sided case}\label{sec:two}

 In this section we consider the two-sided case \eqref{eq:sys_two}, presenting the corresponding versions of the results obtained in the previous sections. We begin by rewriting the system in the form of an abstract Cauchy problem as in \eqref{eq:ACP}, where now  $X=\ell^p(\ZZ,\CC^m)$ for $m\in\NN$ and $1\le p\le\infty$, and 
\begin{equation}\label{eq:A_two}
Ax=(A_0x_k+A_1x_{k-1})_{k\in\ZZ},\quad x=(x_k)_{k\in\ZZ}\in X.
\end{equation}
 As noted in Section~\ref{sec:spec}, the results concerning the spectrum and the resolvent of the operator $A$ in the one-sided case are direct analogues of the results proved in \cite[Section~2]{PauSei17a} for the two-sided case, and we refer the reader to \cite{PauSei17a} for these results. The main difference is that the role of $\Omega_\phi^+$ is now played by the set $\Omega_\phi=\{\lambda\in\rho(A_0):|\phi(\lambda)|=1\}$.

On the other hand, the use of monotonicity conditions is new even in the two-sided case, so we summarise the main statements in the following theorem.

\begin{thm}\label{thm:two}
Let $m\in\NN$ and $1\le p\le\infty$. Suppose that $\sigma(A_0)\subseteq\CC_-$, that $0\in\Omega_\phi\subseteq\CC_-\cup\{0\}$ and that $\phi$ is completely monotonic.  Then the semigroup $\T$ generated by $A$ is bounded, and moreover
$$\|AT(t)\|=O\Bigg(\frac{\log (t)^{|1/2-1/p|}}{t^{1/2}}\Bigg),\quad t\to\infty.$$
Furthermore, if $\phi$ is totally monotonic and satisfies $\phi(\lambda)=p_0(0)/p_0(\lambda)$ for all $\lambda\in\rho(A_0)$, where $p_0$ is the characteristic polynomial of $A_0$, then $\|AT(t)\|=O(t^{-1/2})$ as $t\to\infty$. The latter rate is optimal.
\end{thm}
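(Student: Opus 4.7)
The plan is to transcribe, in order, the proofs of Theorem~\ref{thm:bdd}, Theorem~\ref{thm:asymp}, and Theorem~\ref{thm:no_log}, replacing the one-sided index $k \ge 1$ by $k \in \ZZ$, the set $\Omega_\phi^+$ by $\Omega_\phi$, and the resolvent formula \eqref{eq:res} by its two-sided analogue from \cite[Proposition~2.5]{PauSei17a}, which has exactly the same form except that the finite sum $\sum_{\ell=0}^{k-2}$ is replaced by the infinite series $\sum_{\ell=0}^\infty$ (convergent when $|\phi(\lambda)| < 1$).

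First, for boundedness of $\T$ I would invoke \cite[Theorem~3.1]{PauSei17a}, whose sufficient conditions are precisely \eqref{eq:phi_bound} and \eqref{eq:sum_bound}; since both depend only on $\phi$, the argument from the proof of Theorem~\ref{thm:bdd} — using complete monotonicity to bring the modulus inside the sum in \eqref{eq:sum_bound}, Lemma~\ref{lem:res2} to obtain $\phi'(0) \ne 0$ and hence \eqref{eq:phi_bound}, the partial-fraction decomposition of $\phi/(1-\phi)$, and Widder's theorem — applies verbatim.

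Next, for the logarithmic rate I would combine the two-sided resolvent bound \cite[Proposition~2.5]{PauSei17a} with Lemma~\ref{lem:res2} to obtain $\|R(is, A)\| \asymp |s|^{-2}$ as $|s| \to 0$. Theorem~\ref{thm:mlog} with $\alpha = 2$ then yields $\|AT(t)\| = O(t^{-1/2})$ in the Hilbert case $p = 2$ and $\|AT(t)\| = O(\log(t)^{1/2} t^{-1/2})$ for general $1 \le p \le \infty$; Riesz-Thorin interpolation between $p = 2$ and the endpoints $p \in \{1, \infty\}$ then sharpens the exponent of the logarithm to $|1/2 - 1/p|$, exactly as in the proof of Theorem~\ref{thm:asymp}.

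For the improved rate under total monotonicity, I would mirror Theorem~\ref{thm:no_log} by setting $B = \eps A + I$ for sufficiently small $\eps > 0$ and establishing power-boundedness of $B$ through the Dunford-Schwartz contour integral. The two-sided analogue of \eqref{eq:Bn} differs from the one-sided version only in that the $\ell$-sum is infinite from the outset, but the subsequent estimate \eqref{eq:norm_Bn} already uses the full infinite series in the one-sided proof, and the key inductive inequality \eqref{eq:ind} on the convolution powers $(a_n^{(\ell)})_{n\ge 0}$ depends only on the non-negativity and summability of the sequence $(a_n)_{n\ge 0}$ supplied by total monotonicity, so it transfers unchanged. Dungey's theorem \cite[Theorem~1.2]{Dun08} then supplies $\|AT(t)\| = O(t^{-1/2})$, and optimality is inherited from Remark~\ref{rem:opt} via $\|R(is,A)\| \asymp |s|^{-2}$ and \cite[Corollary~6.11]{BatChi16}. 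The main delicate point I expect is justifying the interchange of summation and contour integration and the uniform convergence of the resolvent series on $\Gamma$, but both are routine since $|\psi_\eps(\lambda)| < 1$ on any circle of radius strictly greater than $1$ and the matrix-valued factor $R_\lambda B_1 R_\lambda$ is uniformly bounded there.
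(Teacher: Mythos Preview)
Your proposal is correct and aligns precisely with the paper's own approach: the paper omits the proof of Theorem~\ref{thm:two}, stating only that it ``uses exactly the same ideas as were used in Theorems~\ref{thm:bdd}, \ref{thm:asymp} and \ref{thm:no_log} above,'' and your plan to transcribe those three arguments with the two-sided resolvent formula, $\Omega_\phi$ in place of $\Omega_\phi^+$, and the infinite $\ell$-sum from the outset is exactly what is intended. The only slip is a harmless inversion of phrasing---complete monotonicity lets you take the modulus \emph{outside} the sum in \eqref{eq:sum_bound}, not inside---and the resolvent formula you need is in the proof of \cite[Theorem~2.3]{PauSei17a} rather than \cite[Proposition~2.5]{PauSei17a}, but these do not affect the substance.
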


The proof of this result uses exactly the same ideas as were used in Theorems~\ref{thm:bdd}, \ref{thm:asymp} and \ref{thm:no_log} above, so we omit it. Moreover, as in Theorems~\ref{thm:asymp} and \ref{thm:no_log} above it is possible to deduce from Theorem~\ref{thm:two} statements about rates of convergence to equilibrium of suitable orbits and about rates of decay of the derivatives of orbits. The corresponding statements in the two-sided case are slightly more involved because when $p=\infty$ orbits may converge to non-zero steady states. We do not make these statements precise here,  instead referring the reader to \cite[Theorem~4.3]{PauSei17a}, and to Remark~\ref{rem:approx} and Section~\ref{sec:platoon} below.

\section{Uniform behaviour of finite systems}\label{sec:finite}

In this  section we use our results for infinite systems to derive uniform estimates for large but finite systems.  We wish to obtain uniform asymptotic estimates for the semigroups $\TN$ generated by the operators $A_N$, $N\ge2$, which are obtained as truncations of the operator $A$ in one of our infinite models of the form \eqref{eq:sys_two} and \eqref{eq:sys_one}. These uniform estimates can be used to study the asymptotic properties of solutions $x(t)=T_N(t)x_0$, $t\ge0$, of the abstract Cauchy problem 
\begin{equation*}\label{eq:ACP_N}
\left\{\begin{aligned}
\dot{x}(t)&=A_Nx(t),\quad t\ge0,\\
{x}(0)&=x_0\in X_N,
\end{aligned}\right.
\end{equation*}
as the size $N$ grows large.

Consider first the one-sided model \eqref{eq:sys_one}.  For $1\le p\le\infty$ and $m\in\NN$, the natural truncations to consider involve the operators $A_N$, $N\ge2$,  acting on the spaces $X_N=\ell^p_N(\CC^m)$ of $\CC^m$-valued sequences of length $N$ endowed with the $p$-norm by $$A_Nx=(A_0x_1,A_0x_2+A_1x_1,\dots,A_0x_N+A_1x_{N-1}),\quad x\in X_N.$$
This case is rather simple and may be dealt with at once. Indeed, for $N\ge2$ let us write $J_N\colon X_N\to X$ for the embedding operator defined by $J_Nx=y$, $x\in X$, where $y_n=x_n$ for $1\le n\le N$ and $y_n=0$ for $n>N$, and let $P_N\colon X\to X_N$ denote the operator which truncates an infinite sequence after its $N$-th entry. Then $P_NJ_N$ is the identity operator on $X_N$, and we have $\|J_N\|=\|P_N\|=1$. Observing that $A_N=P_NAJ_N$ and $T_N(t)=P_NT(t)J_N$, where $\TN$ is the $C_0$-semigroup generated by $A_N$, we deduce that $\|A_NT_N(t)\|\le \|AT(t)\|$, $t\ge0$, for all $N\ge2$. In particular, if the characteristic function
$\phi$ determined by $A_0$ and $A_1$ is completely monotonic, if $\sigma(A_0)\subseteq\mathbb{C}_-$ and if $0\in\Omega_\phi^+\subseteq \mathbb{C}_-\cup \{0\}$, then by (the proof of) Theorem~\ref{thm:asymp}
$$
\sup_{N\ge2}\|A_NT_N(t)\| = O\Bigg(\frac{\log (t)^{|1/2-1/p|}}{t^{1/2}}\Bigg),\quad t\to\infty.
$$

The two-sided model  \eqref{eq:sys_two} is much more delicate. Given any integer $N\ge2$ the most natural truncations of the infinite model are the \emph{circulant truncations}, in which the operator $A$ defined in \eqref{eq:A_two} is replaced by the operator $A_N$ acting on $X_N=\ell^p_N(\CC^m)$ by 
$$A_Nx=(A_0x_1+A_1x_N,A_0x_2+A_1x_1,\dots,A_0x_N+A_1x_{N-1}),\quad x\in X_N.$$
We begin with an abstract result in the spirit of  \cite{MaNa16}.

\begin{prp}\label{prp:uniform}
Let  $S$ be a non-empty countable set and let $X_N$, $N\in S$, be  complex Banach spaces. For each $N\in S$ let $\TN$ be a  $C_0$-semigroup on $X_N$ whose generator $A_N$ satisfies $\sigma(A_N)\cap i\RR\subseteq\{0\}$. Suppose moreover that   $\sup_{N\in S}\|A_N\|<\infty$, that
\begin{equation}\label{eq:unif_bd}
\sup_{N\in S}\sup_{t\ge0}\|T_N(t)\|<\infty
\end{equation}
and that 
\begin{equation}\label{eq:res_bd_approx}
\sup_{N\in S}\|R(is,A_N)\|\le
\begin{cases}
C|s|^{-\alpha},& 0<|s|\le1,\\
C,&|s|\ge1,
\end{cases}
\end{equation}
for some $C>0$ and $\alpha\ge1$.
Then 
\begin{equation}\label{eq:Mart_approx}
\sup_{N\in S}\|A_NT_N(t) \|=O\left(\frac{\log (t)^{1/\alpha}}{t^{1/\alpha}}\right),\quad t\to\infty,
\end{equation}
and if each of the spaces $X_N$, $N\in S$, is a Hilbert space then 
$$\sup_{N\in S}\|A_NT_N(t)\|=O\big(t^{-1/\alpha}\big),\quad t\to\infty.$$
\end{prp}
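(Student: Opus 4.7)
The plan is to reduce the statement directly to the single-operator case covered by Theorem~\ref{thm:mlog} by means of a suitable direct-sum construction. For the general Banach space assertion set $p=\infty$ and for the Hilbert space assertion set $p=2$, and define
\begin{equation*}
X = \Big(\bigoplus_{N\in S} X_N\Big)_{\ell^p}, \qquad A = \bigoplus_{N\in S} A_N.
\end{equation*}
Since $M_A:=\sup_{N\in S}\|A_N\|<\infty$, the operator $A$ is a bounded linear operator on the Banach space $X$ (which is a Hilbert space when $p=2$) with $\|A\|=M_A$, and the associated uniformly continuous semigroup $\T$ is simply the componentwise direct sum $T(t) = \bigoplus_{N\in S} T_N(t)$. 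Its uniform boundedness follows at once from \eqref{eq:unif_bd}.

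The next step is to verify the spectral hypotheses of Theorem~\ref{thm:mlog} for $A$. From the direct-sum structure one has $\lambda\in\rho(A)$ if and only if $\lambda\in\rho(A_N)$ for every $N\in S$ and $\sup_{N\in S}\|R(\lambda,A_N)\|<\infty$, in which case $\|R(\lambda,A)\|=\sup_{N\in S}\|R(\lambda,A_N)\|$. Combining this identity with the assumption $\sigma(A_N)\cap i\RR\subseteq\{0\}$ for every $N$ and with the uniform estimate \eqref{eq:res_bd_approx} yields $\sigma(A)\cap i\RR\subseteq\{0\}$ together with $\|R(is,A)\|=O(|s|^{-\alpha})$ as $|s|\to 0$. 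Thus Theorem~\ref{thm:mlog} applies to $A$ and gives $\|AT(t)\|=O(\log(t)^{1/\alpha}/t^{1/\alpha})$ in general and the sharper bound $\|AT(t)\|=O(t^{-1/\alpha})$ when $p=2$.

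To transfer the estimate back to each individual $A_N$, let $\iota_N\colon X_N\to X$ denote the canonical isometric embedding and $\pi_N\colon X\to X_N$ the coordinate projection, both of which have norm at most $1$ (and in fact norm exactly $1$). Since each subspace $\iota_N X_N$ is invariant under $A$ (and hence under every $T(t)$), one checks that $A\iota_N = \iota_N A_N$ and therefore $A_NT_N(t)=\pi_N A T(t) \iota_N$, so $\|A_NT_N(t)\|\leq\|AT(t)\|$ for every $N\in S$. Taking the supremum in $N$ yields the desired uniform rates.

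The main obstacle anticipated here is that Theorem~\ref{thm:mlog} requires the spectrum of $A$ on the imaginary axis to consist only of $\{0\}$, with quantified resolvent blow-up at the origin — properties which are not automatic from $\sigma(A_N)\cap i\RR\subseteq\{0\}$ alone, since the spectra of the $A_N$ could in principle accumulate on $i\RR\setminus\{0\}$ in the direct sum. It is precisely to prevent such accumulation, and to control the resolvent of $A$ globally along $i\RR\setminus\{0\}$, that the uniform resolvent bound is imposed as a \emph{global} estimate in \eqref{eq:res_bd_approx} rather than only as a local one near the origin.
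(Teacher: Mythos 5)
Your proof is correct and follows essentially the same approach as the paper: form a direct-sum operator on a direct-sum space, verify the hypotheses of Theorem~\ref{thm:mlog} for this single operator, and then restrict back to the components. The only cosmetic difference is that the paper uses the $\ell^2$-direct sum throughout (which is a Banach space in general and a Hilbert space when all the $X_N$ are), whereas you use the $\ell^\infty$-direct sum for the general Banach case; both choices give $\|\bigoplus_N B_N\| = \sup_N\|B_N\|$ and so lead to the same conclusion.
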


\begin{proof}
Let $X$ denote the $\ell^2$-direct sum $\bigoplus_{N\in S} X_N$. Then $X$ is a Banach space, and $X$ is a Hilbert space if each of the spaces $X_N$, $N\in S$, is a Hilbert space. If $B_N\in\B(X_N)$, $N\in S$,  and $\sup_{N\in S}\|B_N\|<\infty$, then $B=\bigoplus_{N\in S}B_N$ is a bounded linear operator on $X$ with norm $\|B\|=\sup_{N\in S}\|B_N\|$. For $t\ge0$ let $T(t)=\bigoplus_{N\in S}T_N(t)$. Then $\T$ is a $C_0$-semigroup on $X$ with generator $A=\bigoplus_{N\in S}A_N$, and 
by \eqref{eq:unif_bd} the semigroup is bounded.
It follows from  \eqref{eq:res_bd_approx} that  $\sigma(A)\cap i\RR\subseteq\{0\}$ and  
 that \eqref{eq:res_bd} holds. Hence  Theorem~\ref{thm:mlog} implies \eqref{eq:Mart}, and that the logarithm may be omitted if $X_N$ is a Hilbert space for every $N\in S$.
\end{proof}

\begin{rem}
Proposition~\ref{prp:uniform} can be extended and generalised in a number of ways. In particular, by using more general versions of Theorem~\ref{thm:mlog} one may obtain variants in which the operator $A=\bigoplus_{N\in S}A_N$ appearing in the above proof is allowed to be unbounded; see also \cite{MaNa16}.
\end{rem}

Our next theorem provides asymptotic estimates for the semigroups generated by the circulant truncations $A_N$, $N\ge2$,  which are uniform in $N$.  Here we again let $\Omega_\phi=\{\lambda\in\rho(A_0):|\phi(\lambda)|=1\}$.

\begin{thm}\label{thm:uniform}
Let $m\in\NN$ and $1\le p\le\infty$. Suppose that $\sigma(A_0)\subseteq\CC_-$, that $0\in\Omega_\phi\subseteq\CC_-\cup\{0\}$ and that $\phi$ is completely monotonic. For $N\ge2$ let $A_N$ be the circulant truncation defined as above on $X_N=\ell^p_N(\CC^m)$, and let $\TN$ be the $C_0$-semigroup generated by $A_N$. Then 
\begin{equation}\label{eq:decay_N}
\sup_{N\ge2}\|A_NT_N(t)\|=O\left(\frac{\log(t)^{|1/2-1/p|}}{t^{1/2}}\right),\quad t\to\infty.
\end{equation}
Furthermore, if $\phi$ is totally monotonic and $\phi(\lambda)=p_0(0)/p_0(\lambda)$ for all $\lambda\in\rho(A_0)$, where $p_0$ is the characteristic polynomial of $A_0$, then 
\begin{equation}\label{eq:no_log}
\sup_{N\ge2}\|A_NT_N(t)\|=O\big(t^{-1/2}\big),\quad t\to\infty.
\end{equation}
\end{thm}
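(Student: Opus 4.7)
The plan is to apply Proposition~\ref{prp:uniform} to the family $\{A_N\}_{N\ge 2}$ with $\alpha=2$, then to upgrade the conclusion by Riesz--Thorin interpolation to obtain \eqref{eq:decay_N} and by re-running the Dungey-theorem argument of Theorem~\ref{thm:no_log} on the direct sum $\bigoplus_N X_N$ to obtain \eqref{eq:no_log}. Of the four hypotheses of Proposition~\ref{prp:uniform}, the bound $\sup_N\|A_N\|\le\|A_0\|+\|A_1\|$ is immediate. The spectral inclusion $\sigma(A_N)\cap i\RR\subseteq\{0\}$ follows from Fourier-diagonalisation of the cyclic shift, which gives $\sigma(A_N)=\sigma(A_0)\cup\{\lambda\in\rho(A_0):\phi(\lambda)^N=1\}$, combined with the fact that the assumption $\Omega_\phi\subseteq\CC_-\cup\{0\}$ forces $|\phi(is)|<1$ for all $s\ne 0$ (by continuity of $|\phi|$ on $i\RR$ and $|\phi(\lambda)|\to0$ at infinity).

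The central observation is an explicit cyclic-convolution formula for the resolvent, obtained by iterating $y_k=R_\lambda x_k+R_\lambda A_1 y_{k-1}$ around $\ZZ/N\ZZ$ and using $(R_\lambda A_1)^\ell=\phi(\lambda)^{\ell-1}R_\lambda A_1$:
\begin{equation*}
(R(\lambda,A_N)x)_k = R_\lambda x_k+\frac{1}{1-\phi(\lambda)^N}\sum_{\ell=1}^{N}\phi(\lambda)^{\ell-1}R_\lambda A_1R_\lambda x_{k-\ell},
\end{equation*}
with subscripts modulo $N$. Young's inequality for cyclic convolutions, the identity $\sum_{\ell=1}^N|\phi(\lambda)|^{\ell-1}=(1-|\phi(\lambda)|^N)/(1-|\phi(\lambda)|)$, and the trivial estimate $|1-\phi(\lambda)^N|\ge 1-|\phi(\lambda)|^N$ together produce a complete cancellation of the $N$-dependent factor and leave the $N$-independent bound $\|R(\lambda,A_N)\|\le\|R_\lambda\|+\|R_\lambda A_1R_\lambda\|/(1-|\phi(\lambda)|)$, which is precisely the bound of Proposition~\ref{prp:res}. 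Together with Lemma~\ref{lem:res2}, this supplies the resolvent hypothesis of Proposition~\ref{prp:uniform}. The remaining hypothesis, uniform boundedness of $\TN$, is obtained by the same cancellation mechanism applied to the Hille--Yosida-type estimates underlying the proof of Theorem~\ref{thm:bdd}.

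Proposition~\ref{prp:uniform} then yields $\sup_N\|A_NT_N(t)\|=O(\log(t)^{1/2}t^{-1/2})$, and since $\bigoplus_N\ell^2_N$ is a Hilbert space the Hilbert-space case of the proposition removes the logarithm entirely for $p=2$. The sharp exponent $|1/2-1/p|$ for general $p$ then follows by Riesz--Thorin interpolation applied to the finite-dimensional operator $A_NT_N(t)$: with $\theta=2/p$ interpolating $(p=2,p_1=\infty)$ when $p\ge 2$, and $\theta=2-2/p$ interpolating $(p=2,p_1=1)$ when $p\le 2$, in both cases the resulting log exponent is $(1-\theta)/2=|1/2-1/p|$, which establishes \eqref{eq:decay_N}.

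For \eqref{eq:no_log} the plan is to re-enact the argument of Theorem~\ref{thm:no_log} on the direct sum $X=\bigoplus_{N\ge 2}X_N$, so that it suffices to show that $B=\bigoplus_N(\eps A_N+I)$ is power-bounded. Computing $B_N^n$ via the Dunford--Schwartz contour integral applied to the cyclic resolvent and expanding $1/(1-\psi_\eps(\lambda)^N)=\sum_{j\ge 0}\psi_\eps(\lambda)^{jN}$ interchanged with $\sum_{\ell=1}^N$, the exponents $\ell+1+jN$ run through precisely the integers $m\ge 2$, just as in the proof of Theorem~\ref{thm:no_log}. The $\ell^1_N$-norm of the resulting cyclic-convolution kernel is therefore bounded by the $\ell^1(\NN)$-norm of the one-sided kernel already controlled via the combinatorial estimate~\eqref{eq:ind}, yielding $\sup_N\sup_n\|B_N^n\|<\infty$. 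Dungey's theorem applied to $A=\bigoplus_N A_N$ on $X$ then delivers \eqref{eq:no_log} after a time rescaling. The main obstacle throughout the plan is the verification of uniform semigroup and uniform power-boundedness; in each instance the decisive mechanism is the cancellation of $1-\phi(\lambda)^N$ (respectively $1-\psi_\eps(\lambda)^N$) in the cyclic-convolution kernel, which reduces every uniform-in-$N$ estimate to an already-established infinite-system estimate.
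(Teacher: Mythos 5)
Your proposal follows the paper's architecture very closely: reduce to Proposition~\ref{prp:uniform} with $\alpha=2$, verify the uniform resolvent bound via the explicit cyclic resolvent formula, interpolate to get \eqref{eq:decay_N}, and re-run the Dungey argument of Theorem~\ref{thm:no_log} on the direct sum to get \eqref{eq:no_log}. Your cyclic resolvent formula (sum from $\ell=1$ to $N$ with weight $\phi(\lambda)^{\ell-1}$) is correct, as is the uniform-in-$N$ bound $\|R(\lambda,A_N)\|\le\|R_\lambda\|+\|R_\lambda A_1R_\lambda\|/(1-|\phi(\lambda)|)$ obtained from it via the $\ell^1$ norm of the cyclic kernel, and the Dungey step via the folded one-sided kernel is a clean and correct account of what the paper invokes.

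There is, however, one genuine gap in the middle of the argument: the verification of the uniform semigroup bound~\eqref{eq:unif_bd}. You propose that this follows ``by the same cancellation mechanism applied to the Hille--Yosida-type estimates underlying the proof of Theorem~\ref{thm:bdd}'', where by ``the same cancellation mechanism'' you have just described Young's inequality for cyclic convolutions together with $|1-\phi(\lambda)^N|\ge 1-|\phi(\lambda)|^N$ and the geometric sum identity. That pointwise cancellation works for $\|R(\lambda,A_N)\|$ but does \emph{not} transfer to the Widder-type bound one actually needs, namely
\[
\sup_{n\in\NN}\sup_{\lambda>0}\frac{\lambda^{n+1}}{n!}\sum_{\ell=0}^{N-1}\bigg|\frac{\dd^n}{\dd\lambda^n}\frac{\phi(\lambda)^\ell}{1-\phi(\lambda)^N}\bigg|<\infty,
\]
uniformly in $N$: differentiation does not commute with taking absolute values, so one cannot bound each $n$-th derivative by a triangle-inequality estimate of the kernel. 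The step that actually closes this gap in the paper is a complete monotonicity observation: since $\phi$ is completely monotonic with $\phi(\lambda)\in(0,1)$ for $\lambda>0$, each function $\lambda\mapsto\phi(\lambda)^\ell(1-\phi(\lambda)^N)^{-1}$ is completely monotonic on $(0,\infty)$ (the second factor being the composition of the absolutely monotonic $x\mapsto(1-x)^{-1}$ with the completely monotonic, $(0,1)$-valued $\phi^N$). This makes the sign of $\frac{\dd^n}{\dd\lambda^n}\frac{\phi(\lambda)^\ell}{1-\phi(\lambda)^N}$ independent of $\ell$, which is exactly what allows the absolute value to be pulled outside the $\ell$-sum; only \emph{then} does the geometric sum collapse the display to $\big|\frac{\dd^n}{\dd\lambda^n}\frac{1}{1-\phi(\lambda)}\big|$, i.e.\ to the $N$-independent condition~\eqref{eq:mon_bd} already verified in Theorem~\ref{thm:bdd}. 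Without this complete monotonicity step your plan does not establish~\eqref{eq:unif_bd}. Once it is inserted, the rest of your argument goes through as written.
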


\begin{proof}
Let $N\ge2$ and suppose that $x,y\in X_N$ and $\lambda\in\CC$ are such that $(\lambda-A_N)x=y$. Then $(\lambda-A_0)x_k=y_k+A_1x_{k-1}$, $1\le k\le N$. If $\lambda\in\rho(A_0)$, we therefore obtain 
\begin{equation}\label{eq:res_N}
x_k=R_\lambda y_k+R_\lambda A_1x_{k-1},\quad 1\le k\le N,
\end{equation}
where $R_\lambda=R(\lambda,A_0)$ for brevity. Applying $A_1$ to both sides and using \eqref{eq:char} we find after a straightforward inductive argument that 
$$A_1x_k=\frac{1}{1-\phi(\lambda)^N}\sum_{\ell=0}^{N-1}\phi(\lambda)^\ell A_1R_\lambda y_{k-\ell},\quad 1\le k\le N,\ \lambda\in\CC\setminus\Omega_\phi.$$
It follows that $\sigma(A_N)\subseteq\Omega_\phi$, and from \eqref{eq:res_N} we obtain the expression
\begin{equation}\label{eq:res_exp_N}
R(\lambda,A_N)x=\left(R_\lambda x_k+\frac{1}{1-\phi(\lambda)^N}\sum_{\ell=0}^{N-1}\phi(\lambda)^\ell R_\lambda A_1R_\lambda x_{k-\ell}\right)_{1\le k\le N}
\end{equation}
for all $\lambda\in\CC\setminus\Omega_\phi $ and $x\in X_N$. The argument used in \cite[Theorem~3.1]{PauSei17a} with only very small modifications shows that the semigroup $\TN$ is bounded provided that  \eqref{eq:phi_bound} holds and 
\begin{equation}\label{eq:res_bd_N}
\sup_{n\in\NN}\sup_{\lambda>0}\frac{\lambda^{n+1}}{n!}\sum_{\ell=0}^{N-1}\bigg|\frac{\dd^n}{\dd\lambda^n}\frac{\phi(\lambda)^\ell}{1-\phi(\lambda)^N}\bigg|<\infty.
\end{equation}
Moreover, if there is an upper bound in \eqref{eq:res_bd_N} which is independent of $N$, then we will in fact have shown that 
\begin{equation}\label{eq:sg_bd_N}
\sup_{N\ge2}\sup_{t\ge0}\|T_N(t)\|<\infty.
\end{equation}
As shown in the proof of Theorem~\ref{thm:bdd} above, condition \eqref{eq:phi_bound} holds by our assumption that $\phi$ is completely monotonic. Since $\phi(\lambda)\in(0,1)$ for all $\lambda>0$, the same assumption also implies that the restriction to $(0,\infty)$ of the function $\lambda\mapsto\phi(\lambda)^\ell(1-\phi(\lambda)^N)^{-1}$ is completely monotonic for every $\ell\ge0$ and $N\ge2$. Arguing once again as in the proof of Theorem~\ref{thm:bdd} we find that \eqref{eq:res_bd_N} reduces to \eqref{eq:mon_bd}, which is not only independent of $N$ but also satisfied, as was shown in the same proof. Thus \eqref{eq:sg_bd_N} holds. It moreover follows from \eqref{eq:res_exp_N} that
$$\|R(is,A_N)\|\le \|R_{is}\|+\frac{\|R_{is} A_1 R_{is}\|}{1-|\phi(is)|},\quad s\in\RR\setminus\{0\},$$
and hence by \cite[Lemma~2.6]{PauSei17a}, which is the two-sided version of Lemma~\ref{lem:res} above, and by Lemma~\ref{lem:res2} we see that  \eqref{eq:res_bd_approx} holds for $\alpha=2$ and some $C>0$.
It now follows from Proposition~\ref{prp:uniform} and the Riesz-Thorin interpolation theorem applied to the product semigroup appearing in the proof of Proposition~\ref{prp:uniform} that \eqref{eq:decay_N} holds. The final statement follows from the argument used in the proof of Theorem~\ref{thm:no_log} applied to the product semigroup.
\end{proof}

\begin{rem}\phantomsection\label{rem:approx}
 Recall from \cite{PauSei17a} that by differentiating \eqref{eq:char} and setting $\lambda=0$ we obtain $-A_1A_0^{-2} A_1=\phi'(0)A_1$. Hence if $\phi'(0)\ne0$ then $A_1A_0^{-1}$ maps $\Ran(A_0^{-1}A_1)$ bijectively onto $\Ran A_1$. We denote the inverse of this isomorphism by $L$. Note that $\phi'(0)\ne0$ when $\phi$ is completely monotonic, in which case we moreover have $\phi(0)=1$. It is then straightforward to show that in the setting of Theorem~\ref{thm:uniform} we have $\Ker A_N=\{(y,\dots,y)\in X_N:y\in\Ran(A_0^{-1}A_1)\}$ and that the projection $P_N\colon X_N\to X_N$ onto $\Ker A_N$ along $\Ran  A_N$ is given by $P_Nx=(LQx,\dots ,LQx)$, $x\in X_N$, where $Q\colon X_N\to \CC^m$ is given by
$$Qx=\frac1N\sum_{k=1}^NA_1A_0^{-1}x_k,\quad x\in X_N.$$
For any $N\ge2$ and $x\in X_N$ we have 
$$T_N(t)x-P_Nx=T_N(t)(x-P_Nx)=A_NT_N(t)y,\quad t\ge0,$$
for some $y\in X_N$, so Theorem~\ref{thm:uniform} implies that $\|T_N(t)x-P_Nx\|\to0$ as $t\to\infty$ for all $x\in X_N$, but it moreover provides a rate of convergence which is uniform in $N$ subject to the condition that the norms of the preimages $y\in X_N$ are uniformly bounded.
\end{rem}

We conclude this section with a simple example showing that the uniform rates of convergence obtained in \eqref{thm:uniform} are in general best possible.

\begin{ex}
Suppose that $m=1$ and $1\le p\le \infty$, and let  $A_0=-1$ and $A_1=1$. In this case, which arises in the so-called \emph{robot-rendezvous model} \cite{FeiFra12}, all the assumptions of Theorem~\ref{thm:uniform} are satisfied, and the characteristic function $\phi$ is given by $\phi(\lambda)=(\lambda+1)^{-1}$, $\lambda\in\CC\setminus\{-1\}$. In particular, $\phi$ is totally monotonic and of the form $\phi(\lambda)=p_0(0)/p_0(\lambda)$,  $\lambda\in\CC\setminus\{-1\}$, where $p_0$ is the characteristic polynomial of the `matrix' $A_0$, so by Theorem~\ref{thm:uniform} we obtain the uniform estimate \eqref{eq:no_log}.
Now for each $N\ge2$ the matrix $A_N$ is a circulant matrix whose spectrum satisfies $\sigma(A_N)=\{\lambda\in\CC:(\lambda+1)^N=1\}$, and hence the spectral mapping theorem implies that
$$\|A_NT_N(t)\|\ge\max_{1\le k<N}\big|e^{2\pi ik/N}-1\big|\exp{\big(t\left(\cos(2\pi k/N)-1\right)\big)},\quad t\ge0.$$
Considering only the eigenvalue corresponding to $k=1$ and choosing $t_N=N^2$, it follows from a simple estimate that
$$\|A_NT_N(t_N)\|\ge ct_N^{-1/2},\quad N\ge2,$$ 
for some constant $c>0$, so \eqref{eq:no_log} is sharp in this case even though for each individual $N\ge2$ the truncated model converges to its equilibrium at an exponential rate. In fact, by Remark~\ref{rem:approx} the agents in the truncated models always converge to the centroid of their initial positions.

In the corresponding one-sided model,  a simple calculation shows that for $p\in\{1,\infty\}$ we have
$$\|A_NT_N(t)\|\ge\frac{t^{N-1}}{(N-1)!}e^{-t}, \quad N\ge2,\ t\ge0,$$
and setting $t_N=N$ it follows from Stirling's formula that  
$$\|A_NT_N(t_N)\|\ge ct_N^{-1/2},\quad N\ge2,$$ 
for some constant $c>0$. The proof of Theorem~\ref{thm:no_log}, or alternatively a simple direct calculation, shows that the infinite system satisfies $\|AT(t)\|=O(t^{-1/2})$ as $t\to\infty$. It follows that the method described at the beginning of the section, of obtaining a uniform decay rate for the truncated systems by comparing them to the corresponding infinite system, is optimal in this case, at least for $p\in\{1,\infty\}$.
 \end{ex}

\section{Asymptotic behaviour in the platoon model}\label{sec:platoon}

In this final section we apply the results of the previous sections to the important example of the two-sided \emph{platoon model}, which has previously been studied for instance in \cite{PloSch11, SwaHed96,ZwaFir13}, and by the authors in \cite{PauSei17a, PauSei18}. In the platoon model we consider countably many agents, which we think of as vehicles and which are indexed by $k\in\ZZ$. The state vector of agent $k$  takes to form
$$x_k(t)= \left( \begin{array}{c}
y_k(t) \\
v_k(t)-v\\
a_k(t)  \end{array} \right),\quad k\in\ZZ,\; t\ge0,$$
where $y_k(t)=d_k-d_k(t)$ denotes the discrepancy between the target separation $d_k$ between agents $k$ and $k-1$ and their actual distance $d_k(t)$  at time $t$, $v_k(t)$ is the velocity of agent $k$ at time $t$, $v$ the target velocity of the entire platoon, and $a_k(t)$ is the acceleration of agent $k$ at time $t$. We take the system to evolve according to \eqref{eq:sys_two}, where $m=3$ and the matrices $A_0, A_1$ are given by 
$$A_0
= \left( \begin{array}{ccc}
 0&1& 0 \\
 0 & 0 & 1\\
-\alpha_0 & -\alpha_1 & -\alpha_2  \end{array} \right)
\quad\mbox{and}\quad
A_1= \left( \begin{array}{ccc}
0 & -1 & 0 \\
0&0 & 0\\
0&0 & 0 \end{array} \right).$$
Here the constants $\alpha_0,\alpha_1,\alpha_2\in\CC$ are thought of as control  parameters. They capture how the agents adjust their acceleration in response to their current state vector; see \cite[Section~5]{PauSei17a} for further details. Since $A_1$ has rank one, condition \eqref{eq:char} is satisfied and a simple calculation shows that the characteristic function of the system is given by $\phi(\lambda)=\alpha_0/p_0(\lambda)$, $\lambda\in\rho(A_0)$, where $p_0(\lambda)=\lambda^3+\alpha_2\lambda^2+\alpha_1\lambda+\alpha_0$ is the characteristic polynomial of $A_0$. For the matrix $A_0$ to satisfy $\sigma(A_0)\subseteq\CC_-$ we need the coefficients $\alpha_0,\alpha_1,\alpha_2$ to satisfy certain conditions, and in particular we need $\alpha_0\ne0$. As remarked in \cite[Remark~5.2(b)]{PauSei17a}, it is possible for the semigroup $\T$ generated by the system operator $A$ to be bounded but non-contractive, even when $p=2$. Since $\phi'(0)=-\alpha_1/\alpha_0$, for the condition $\phi'(0)\ne0$ appearing in Remark~\ref{rem:approx} to hold, and hence for there to be any hope of $\phi$ being completely monotonic, we moreover need $\alpha_1\ne0$. 

We obtain the following generalisation of \cite[Theorem~5.1]{PauSei17a} and \cite[Corollary~5.1]{PauSei18}.  The key improvement here is that we obtain boundedness of the underlying semigroup and (almost) sharp rates of decay for much larger classes of characteristic functions than could previously be handled. We define 
$$Y=\Big\{x_0\in X:\lim_{t\to\infty}x(t)\mbox{ exists}\Big\}$$
and we let $S$ denote the right-shift operator on $\ell^1(\ZZ)$ and on $\ell^\infty(\ZZ)$. Statements (b) through (d) of the result are consequences of Theorem~\ref{thm:two}, while part (a) follows from the same arguments as in the proof of \cite[Theorem~5.1]{PauSei17a}.

\begin{thm}
Let $1\le p\le\infty$ and consider the platoon model. Suppose that $\alpha_1,\alpha_2,\alpha_3\in\CC$ are such that the characteristic function $\phi$ is completely monotonic.   
\begin{enumerate}[(a)]
\item[\textup{(a)}] We have $Y=X$ if and only if $1< p<\infty$. More specifically:
\begin{enumerate}
\item[\textup{(i)}] If $1<p<\infty$ then $Y=X$ and $x(t)\to0$ for all $x_0\in X$.

\item[\textup{(ii)}] If $p=1$ and $x_0\in X$ then $x_0\in Y$ if and only if the vector $y_0=(y_k(0))_{k\in\ZZ}\in\ell^1(\ZZ)$ of initial deviations is such that
\begin{equation}
\label{lim_plat_finite}
\bigg\|\frac{1}{n}\sum_{k=1}^nS^{k}y_0\bigg\|_{\ell^1(\ZZ)}\!\!\to0,\quad n\to\infty,
\end{equation}
and if this  holds then $x(t)\to0$ as $t\to\infty$. 
\item[\textup{(iii)}] If $p=\infty$ and $x_0\in X$ then $x_0\in Y$ if and only if there exists $c\in\CC$ such that for $y=(\dotsc,c,c,c,\dotsc)$ we have
\begin{equation}
\label{lim_plat_infty}
\bigg\|\frac{1}{n}\sum_{k=1}^nS^{k}y_0-y\bigg\|_{\ell^\infty(\ZZ)}\!\!\to0,\quad n\to\infty,
\end{equation}
and if this holds then $x(t)\to z$ as $t\to\infty$, where 
\begin{equation}
z=\left(\dotsc,\begin{pmatrix}
c\\ -\alpha_0 c/\alpha_1\\ 0
    \end{pmatrix},\begin{pmatrix}
c\\ -\alpha_0 c/\alpha_1\\ 0
    \end{pmatrix},\begin{pmatrix}
c\\ -\alpha_0 c/\alpha_1\\ 0
    \end{pmatrix},\dotsc\right).
\end{equation}
\end{enumerate}

\item[\textup{(b)}] 
\begin{enumerate}
\item[\textup{(i)}] If $1\le p<\infty$ and the decay in \eqref{lim_plat_finite} is like $O(n^{-1})$ as $n\to\infty$  then
\begin{equation}\label{eq:plat_rate}
\|x(t)\|=O\left(\frac{\log( t)^{|1/2-1/p|}}{t^{1/2}}\right),\quad t\to\infty.
\end{equation}
\item[\textup{(i)}] If $p=\infty$ and the decay in \eqref{lim_plat_infty} is like $O(n^{-1})$ as $n\to\infty$ then
\begin{equation}\label{eq:plat_rate2}
\|x(t)-z\|=O\bigg(\frac{\log (t)^{1/2}}{t^{1/2}}\bigg),\quad t\to\infty.
\end{equation}
\end{enumerate}
\item[\textup{(c)}] For $1\le p\le\infty$ and all  $x_0\in X$ we have
\begin{equation}\label{eq:plat_rate3}
\|\dot{x}(t)\|=O\left(\frac{\log (t)^{|1/2-1/p|}}{t^{1/2}}\right),\quad t\to\infty.
\end{equation}
\item[\textup{(d)}] If the characteristic is totally monotonic, then we may omit the logarithmic factors in \eqref{eq:plat_rate}, \eqref{eq:plat_rate2} and \eqref{eq:plat_rate3} even when $p\ne2$, and the resulting rates are optimal.
 \end{enumerate}
\end{thm}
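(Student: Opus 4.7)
The plan is to verify the hypotheses of Theorem~\ref{thm:two} (and, for part~(a), of the two-sided analogue of Theorem~\ref{thm:asymp} from \cite[Theorem~4.3]{PauSei17a}) for the platoon model, and then to invoke those theorems. The only hypothesis requiring real verification is $0\in\Omega_\phi\subseteq\CC_-\cup\{0\}$. Since $p_0(0)=\alpha_0$, we have $\phi(0)=1$, so $0\in\Omega_\phi$. Complete monotonicity of $\phi$ yields via Bernstein's theorem a non-negative integrable function $g$ on $(0,\infty)$ with $\phi(\lambda)=\int_0^\infty e^{-\lambda t}g(t)\,\dd t$ and $\int_0^\infty g(t)\,\dd t=1$. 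For $\lambda\in\overline{\CC_+}\setminus\{0\}$ the triangle inequality gives $|\phi(\lambda)|\le\phi(\R\lambda)\le 1$, with strict inequality since $g$, being a non-zero finite combination of terms $t^ne^{\xi t}$ with $\R\xi<0$, cannot be concentrated at a single point. Hence $\Omega_\phi\cap\overline{\CC_+}=\{0\}$, which combined with $\sigma(A_0)\subseteq\CC_-$ gives the required inclusion.

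For part~(a) I would transcribe the argument of \cite[Theorem~5.1]{PauSei17a} to the present setting. The two-sided analogue of \eqref{eq:Y} identifies $Y$ with the closure of $\Ran A$, augmented in the case $p=\infty$ by a one-dimensional space of fixed points. Using the explicit forms of $A_0$ and $A_1$, the operator $A_1A_0^{-1}$ acts on a state vector $(y,v,a)^T$ by returning a vector whose only non-zero entry is $-\alpha_0^{-1}y$ in the first slot. This reduces the mean-ergodic characterisation to a condition on the deviation sequence $y_0=(y_k(0))_{k\in\ZZ}$ alone, yielding \eqref{lim_plat_finite} and \eqref{lim_plat_infty}. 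The stationary limit in subcase~(iii) is produced by checking that $(c,-\alpha_0 c/\alpha_1,0)^T\in\Ker(A_0+A_1)$, so the corresponding constant sequence lies in $\Ker A$ and is the limit $z$.

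Parts~(b), (c) and (d) follow from Theorem~\ref{thm:two}. For part~(b), the $O(n^{-1})$ decay hypothesis places $x_0-z$ in $\Ran A$ with a fixed preimage $y\in X$, so $x(t)-z=T(t)(x_0-z)=AT(t)y$ and $\|x(t)-z\|\le\|AT(t)\|\|y\|$; Theorem~\ref{thm:two} then yields \eqref{eq:plat_rate} and \eqref{eq:plat_rate2}. Part~(c) is immediate since $\dot x(t)=AT(t)x_0$ for any $x_0\in X$. For part~(d), the additional hypotheses of Theorem~\ref{thm:two} (total monotonicity of $\phi$ and $\phi(\lambda)=p_0(0)/p_0(\lambda)$) upgrade the rate to $\|AT(t)\|=O(t^{-1/2})$, eliminating the logarithmic factors in \eqref{eq:plat_rate}, \eqref{eq:plat_rate2} and \eqref{eq:plat_rate3}; sharpness is the final assertion of Theorem~\ref{thm:two}, in the spirit of Remark~\ref{rem:opt}.

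The main obstacle is the $p=\infty$ case of part~(a), where one must simultaneously extract the constant $c$ from an ergodic mean of $y_0$, construct the stationary limit $z$ from $c$, and prove uniform convergence $x(t)\to z$ in $\ell^\infty$. The passage between the mean-ergodic condition on $y_0$ and membership in $\overline{\Ran A}$ relies crucially on $\phi'(0)=-\alpha_1/\alpha_0\ne0$; this condition, which is automatic under complete monotonicity and corresponds to the requirement $\alpha_1\ne0$, is what allows one to define the projection onto $\Ker A$ along $\overline{\Ran A}$, as in Remark~\ref{rem:approx}. A further minor technical point in part~(b) is that the preimage $y$ of $x_0-z$ must be chosen with norm controlled by the constant in the $O(n^{-1})$ hypothesis, which again rests on the explicit form of $A_1A_0^{-1}$.
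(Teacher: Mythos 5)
Your proof follows the same route as the paper: cite Theorem~\ref{thm:two} for parts (b)--(d) and transcribe the argument of \cite[Theorem~5.1]{PauSei17a} for part (a). The paper's own justification is essentially only this citation, so your added verification that complete monotonicity of $\phi$ forces $\sigma(A_0)\subseteq\CC_-$ and $0\in\Omega_\phi\subseteq\CC_-\cup\{0\}$ is a genuine and useful elaboration: since $\phi(\lambda)=\alpha_0/p_0(\lambda)$ has poles exactly at $\sigma(A_0)$ and, by Bernstein's theorem, extends to a bounded function on $\overline{\CC_+}$, one automatically gets $\sigma(A_0)\subseteq\CC_-$ (you derive this rather than assuming it, as your argument in fact shows), and your strict-inequality observation for $\lambda\in\overline{\CC_+}\setminus\{0\}$, using that the density $g$ is continuous and not concentrated, correctly yields $\Omega_\phi\cap\overline{\CC_+}=\{0\}$. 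One small omission: for part (d) you should note explicitly that the condition $\phi(\lambda)=p_0(0)/p_0(\lambda)$ required by Theorem~\ref{thm:two} is automatic in the platoon model because $\phi(\lambda)=\alpha_0/p_0(\lambda)$ and $p_0(0)=\alpha_0$; you mention the hypothesis but not why it holds. Otherwise the argument is correct and matches the paper's intent.
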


We may also use the platoon model to illustrate our results on uniform rates of decay for large but finite cyclic systems, as considered in Section~\ref{sec:finite}. Note that in this case  the projection $P_N$ onto $\Ker A_N$ along $\Ran A_N$ for $N\ge2$ considered in Remark~\ref{rem:approx} is given by
$$P_Nx=\left(\left(\begin{array}{c}
c_x \\
-\alpha_0 c_x/\alpha_1\\
0  \end{array} \right),\dots,\left(\begin{array}{c}
c_x \\
-\alpha_0 c_x/\alpha_1\\
0  \end{array} \right)\right),\quad x\in X_N,$$
where  $c_x\in\CC$ denotes the centroid of the first components of $x\in X_N$.

\begin{ex}
Let $\zeta_1,\zeta_2,\zeta_3>0$ and let $\alpha_0=-\zeta_1\zeta_2\zeta_3$, $\alpha_1=\zeta_1\zeta_2+\zeta_2\zeta_3+\zeta_3\zeta_1$, $\alpha_2=\zeta_1+\zeta_2+\zeta_3$. Then $\sigma(A_0)=\{-\zeta_1,-\zeta_2, -\zeta_3\}$ and by Example~\ref{ex:tot_mon}(a) the characteristic function $\phi$ is totally monotonic in this case. By Theorem~\ref{thm:two} we have $\|AT(t)\|=O(t^{-1/2})$ as $t\to\infty$, and by Theorem~\ref{thm:uniform} the circulant truncations satisfy the uniform estimate~\eqref{eq:no_log}. Using Remark~\ref{rem:approx} we may deduce a uniform rates of convergence to the steady states of the growing circulant truncations, which are obtained by applying the operators $P_N$, $N\ge2$, to the initial data.
\end{ex}

\begin{ex}
Let $a,b,c>0$ and let $\alpha_0=(a^2+b^2)c$, $\alpha_1=a^2+b^2+2ac$, $\alpha_2=2a+c$. Then $\sigma(A_0)=\{-c,-a\pm ib\}$ and by Examples \ref{ex:comp_mon}(a) and  \ref{ex:tot_mon}(a) the characteristic function $\phi$ is completely monotonic if and only if $a\ge c$ and it is totally monotonic if and only if $a>c$. In either case the resolvent growth parameter is $n_\phi=2$. By Theorem~\ref{thm:two} we have 
\begin{equation}\label{eq:platoon}
\|AT(t)\|=O\Bigg(\frac{\log (t)^{|1/2-1/p|}}{t^{1/2}}\Bigg),\quad t\to\infty,
\end{equation}
for $a= c$ and $\|AT(t)\|=O(t^{-1/2})$ as $t\to\infty$ when $a>c$. We expect that the logarithmic term in \eqref{eq:platoon} can be omitted even when $a=c$ and $p\ne2$. Once again we may study the uniform asymptotic behaviour of large circulant truncations using  Theorem~\ref{thm:uniform} and  Remark~\ref{rem:approx}.
\end{ex}

\bibliographystyle{plain}

\end{document}